\documentclass[12pt]{amsart}
%\documentclass[a4,12pt]{article}
%\subjclass[2010]{17B66,11F22}
\usepackage[all]{xy}
\usepackage{amsthm}
\usepackage{amsmath}
\usepackage{amssymb}
\usepackage{color}
\usepackage{float}
\usepackage{graphicx}
\usepackage[english]{babel}
\usepackage{verbatim}
\usepackage{enumerate}
%\usepackage[margin=1in]{geometry}

%\usepackage[numbers, square, comma, sort&compress]{natbib}
%\usetikzlibrary{arrows}
%\usetikzlibrary{calc}
%\usetikzlibrary{matrix}
%\usetikzlibrary{trees}

%\setlength\parindent{0pt}
%\setlength{\parindent}{4em}

\setlength{\parskip}{1em}

\theoremstyle{plain}
%_________________________________________________________________________________________________________
\newtheorem{theorem}{Theorem}[section]
\newtheorem{definition}[theorem]{Definition}
\newtheorem{lemma}[theorem]{Lemma}

\newtheorem{solution*}{Solution}

\numberwithin{equation}{section}
%_____________________________________________________________________
\allowdisplaybreaks

\def\Der{\operatorname{Der}}
\def\span{\operatorname{span}}

\def\mg{\mathfrak{g}}
\def\mh{\mathfrak{h}}

\def\gl{\mathfrak{gl}}
\def\C{ \mathbb{C}}
\def\Z{ \mathbb{Z}}
\def\p{\partial}
\def\a{\alpha}
\def\b{\beta}

\def\span{\text{span}}

\def\Der{\text{Der}}
\def\mm{\mathfrak{m}}

\setlength{\parskip}{2ex}
\setlength{\oddsidemargin}{0.in}
\setlength{\evensidemargin}{0.in}
\setlength{\textwidth}{6.5in}
\setlength{\topmargin}{-0.5in}
\setlength{\textheight}{9in}
\def\p{\partial}

\synctex=1

\begin{document}
\title[Irreducible weight modules]{Irreducible Jet modules for
the vector field Lie algebra on $\mathbb{S}^1\times \mathbb{C}$}
\author{Mengnan Niu,  Genqiang Liu }

\begin{abstract}For a commutative algebra $A$ over $\mathbb{C}$,
 denote $\mathfrak{g}=\text{Der}(A)$. A module over  the smash product $A\# U(\mathfrak{g})$ is called a jet $\mathfrak{g}$-module, where $U(\mathfrak{g})$
 is the universal enveloping algebra of $\mathfrak{g}$.
In the present paper, we study jet modules in the case of $A=\mathbb{C}[t_1^{\pm 1},t_2]$.
 We show that  $A\#U(\mathfrak{g})\cong\mathcal{D}\otimes U(L)$,
  where $\mathcal{D}$ is the Weyl   algebra
$\mathbb{C}[t_1^{\pm 1},t_2, \frac{\partial}{\partial t_1},\frac{\partial}{\partial t_2}]$, and $L$ is a Lie subalgebra of  $A\# U(\mathfrak{g})$ called the jet Lie algebra corresponding to $\mathfrak{g}$. Using a Lie algebra isomorphism $\theta:L \rightarrow \mathfrak{m}_{1,0}\Delta$,  where $\mathfrak{m}_{1,0}\Delta$ is the subalgebra of
 vector fields vanishing at the point $(1,0)$,
we show that
any irreducible finite dimensional  $L$-module is isomorphic to an irreducible
$\gl_2$-module. As an application, we  give  tensor product realizations of  irreducible   jet modules over $\mathfrak{g}$ with uniformly bounded
weight spaces.
\end{abstract}
\date{}\maketitle
\vskip 10pt \noindent {\em Keywords:}   Smash product, jet module,  weight module, Weyl algebra, jet algebra.

\vskip 5pt
\noindent
{\em 2020  Math. Subj. Class.:}
17B10, 17B20, 17B65, 17B66, 17B68

\vskip 10pt

\section{Introduction}

Among the theory of infinite dimensional Lie algebras,
 Lie algebras $\mathcal{V}_X$ of polynomial vector fields (i.e., the derivation Lie algebras of the affine coordinate algebras $A_X$)
 on irreducible affine algebraic varieties  $X$ is an important class of Lie
algebras.  This kind of Lie algebras have been studied
in the conformal field theory, see \cite{BP}. Unlike finite dimensional simple Lie algebras,
 the representation theory of vector fields Lie algebras at large is still not well developed.
 The centerless Virasoro algebra $W_1$ is the Lie algebra of polynomial vector fields on the
circle $\mathbb{S}^1$ whose  irreducible modules with
finite-dimensional weight spaces  were classified in \cite{M}.  Higher rank Witt
algebras $W_n$ are simple Lie algebras of polynomial vector fields on $n$-dimensional
torus.  There are quite a lot of
studies on representations  for $W_n$, see \cite{BMZ,BZ,E1,E2,EJ,GL,GLZ,GLLZ,GZ,LZ,MZ,TZ}.
 Billig and Futorny  classified all irreducible
Harish-Chandra $W_n$-module, see \cite{BF1}.  Weight modules for the Lie algebra of vector fields
on $\C^n$ were studied in \cite{CG,CoG,DSY,R1,R2,PS,XL}.
  Recently,  there is a systematic study
 on representations of the Lie algebra $\mathcal{V}_X$ for arbitrary smooth affine varieties $X$,
see \cite{BF2,BF3,BFN,BN,BIN}.

Let $A$ be a commutative associative algebra, $\mg$ the derivation algebra $\Der(A)$ of $A$.
 We say that a module $M$ is a jet module (also called $AV$-module in \cite{BFN,BIN})
  if it is a module both for the Lie algebra $\mg$ and for the commutative associative algebra $A$,
  and the action between $A$ and $\mg$ are compatible in the following rule:
$ X(fv)=f(Xv)+X(f)v, X\in\mg, f\in A, v\in M$. This definition of jet modules is
more general, without the condition that $M$ is free over $A$.
 The name of  jet modules (which are also free over the affine coordinate algebras) for vector field Lie algebras was firstly introduced in \cite{B}.
  Jet modules for  $\Der  \C[t_1^{\pm 1},\cdots, t_n^{\pm 1}]$
  were classified in \cite{E2}.  In \cite{B}, this classification was also given by the technique of polynomial modules. Since $A$ is a left  module algebra over
   the Hopf algebra $U(\mg)$, we have the smash product algebra $A\#U(\mg)$.
Then jet modules are exactly  modules over the associative algebra $A\# U(\mg)$.
 When $A=\C[x_1,\dots,x_n]$, the structure of the  algebra $A\# U(\mg)$
 was independently described by the classical Weyl algebra and the Lie algebra of
 vector fields vanishing at the origin in \cite{XL} and \cite{BIN}.
 We found that the ways of approaching jet modules
  for  $\Der  \C[t_1^{\pm 1},\cdots, t_n^{\pm 1}]$ and  $\Der  \C[t_1,\cdots, t_n]$ are different, see \cite{B,E2,BIN,XL}.
 Our motivation in the present paper is to find  a general method to
 handle jet modules for any vector field Lie algebra $\mathcal{V}_X$.
  As a tentative research, we study the structure of $A\#U(\mg)$
 for the mixed type algebra $\mathbb{C}[t_1^{\pm 1},t_2]$.
 We show that $A\#U(\mg)\cong \mathcal{D}\otimes U(L)$,
  where $\mathcal{D}$ is the differential operator  algebra
$\mathbb{C}[t_1,t_1^{-1},t_2, \frac{\partial}{\partial t_1},\frac{\partial}{\partial t_2}]$,
 and $L$ is the jet Lie algebra of $\mg$.  Using a Lie algebra isomorphism $\theta:L \rightarrow \mm_{1,0}\Delta$,  rather than  the technique of polynomial modules,
 we  show that any irreducible finite dimensional  $L$-module is actually an irreducible
$\gl_2$-module.
As a corollary, we give  a classification of  irreducible jet modules over $\mg$ with uniformly bounded weight spaces.
These known results reveal that the key point in researching  jet modules
for any vector field Lie algebra $\mathcal{V}_X$
 lies in  clarifying the
structure of the jet algebra associated with $\mathcal{V}_X$.

The paper is organized as follows. In Section 2,  we give some basic
facts about $\mg$ and our main result, see Theorem \ref{mainth}.
In Section 3, we study the structure of the jet Lie algebra $L$ for $\mg$.
We also classify finite dimensional irreducible modules over $L$, see Theorem \ref{L-module}. The  isomorphism $L \cong\mm_{1,0}\Delta$,   in Lemma \ref{lemma-hom} makes the structure of $L$ become very clear.
 In Section 4, we give a proof of Theorem \ref{mainth}.
   It should be noted that to separate the operator $\frac{\partial}{\partial t_1}$ in
   $\mathcal{D}=\mathbb{C}[t_1^{\pm 1},t_2, \frac{\partial}{\partial t_1},\frac{\partial}{\partial t_2}]$
   from $A\# U(\mg)$, we use $t_1^{-1}\cdot t_1\partial_1$ instead of
 the element $\partial_1$ in $\mg$.
 Finally, we give  tensor product realizations of  irreducible  jet modules over $\mg$ with uniformly bounded weight spaces, see Theorem \ref{th4.5}.

We denote by $\mathbb{Z}$, $\mathbb{Z}_+$, $\mathbb{N}$ and
$\mathbb{C}$ the sets of  all integers, nonnegative integers,
positive integers and complex numbers, respectively. For a Lie algebra
$L$ over $\C$, we use $U(L)$ to denote the universal enveloping algebra of $L$.

\section{Preliminaries  and main results}

We fix the vector space $\mathbb{C}^2$ of $2\times 1$ complex  matrices.
Denote the standard basis by $\{e_1,e_2\}$. For
$m=(m_1,m_2)\in \mathbb{Z}\times \mathbb{Z}_+$, denote $t^m=t_1^{m_1}t_2^{m_2}$.
Let $A=\mathbb{C}[t_1^{\pm 1},t_2], \mathfrak{g}=\text{Der}(A)$.
 Denote
$\p_i=\frac{\p}{\p t_i}$, $i=1,2$.
Then the algebra $\mg$ can be defined as follows:
\[\mg=\text{Span}\{t^{\a}\p_i|\a\in \Z \times \Z_+, i=1,2\}.\]
We can write the Lie bracket in $\mg$  as follows:
$$[t^{\a}\p_i, t^{\b}\p_j]=\b_it^{\a+\b-e_i}\p_j-\a_jt^{\a+\b-e_j}\p_i,
\forall\ \a, \b \in \Z \times \Z_+, 1\leq i,j\leq 2. $$
Note that the subspace $\mh=\text{Span}\{t_1\p_1,t_2\p_2\}$ is a Cartan subalgebra of $\mg$, i.e.,
a self-normalizing nilpotent  Lie subalgebra.

We denote the semidirect Lie algebra $\mg\ltimes A$ by $\widetilde{\mg}$.

\begin{definition}A $\widetilde{\mg}$-module $M$ is called a weight module if the action of $\mh$ on $M$
is diagonalizable, i.e.,
$M=\bigoplus_{\lambda \in \mh^{*}}M_{\lambda},$ where
$$M_{\lambda}=\{v \in M|(h-\lambda(h)Id)v=0,  \forall \ h \in \mh\}.$$
\end{definition}

\begin{definition}A module $M$ over $\widetilde{\mg}$ is a jet
$\mg$-module if the action  of $A$ is associative, i.e., $g(fv)=(gf)v$, for any $f,g\in A, v\in M$.
\end{definition}

Since  $A$ is a left  module algebra over the Hopf algebra $U(\mg)$, we have the smash product algebra $A\# U(\mg)$. A jet module is actually a module
over $A\#U(\mg)$.
We use $\cdot$ to denote the multiplication between $A$ and $U(\mg)$ in $A\sharp U(\mg)$.
To find homogenous elements in $A\# U(\mg)$ that  commute with
$t_1\cdot 1,t_2\cdot 1, 1\cdot t_1\partial_1, 1\cdot \partial_2$, we
define the following element:
 \begin{equation}\label{X}\aligned X_k(m)&=\sum_{i=0}^{m_2}(-1)^i\binom{m_2}{i}t_1^{-m_1}t_2^i\cdot t_1^{m_1+\delta_{k1}}t_2^{m_2-i}\partial_k-
 \delta_{m_2,0}1\cdot t_1^{\delta_{k1}}\partial_k,
 \endaligned\end{equation}where $m=(m_1,m_2)\in\Z\times \Z_+$, $k=1,2$.
 Note that $X_k(0,0)=0$ for $k=1,2$.

Let $L$ be the Lie subalgebra of $A\# U(\mg)$ spanned by $ X_k(m)$, for $k=1,2, m\in\Z\times \Z_{+}$,
 which is called the jet Lie algebra of $\mg$.  In the present paper,  we will show the following algebra  isomorphism.

\begin{theorem}\label{mainth}
The linear map $\phi:A\#U(\mg)\rightarrow \mathcal{D}\otimes U(L)$ defined by
$$\aligned\phi(1\cdot t^{m+\delta_{k1}e_1}\partial_k)&=
t_1^{m_1+\delta_{k1}}t_2^{m_2}\partial_k\otimes 1+\sum_{i=0}^{m_2}\binom{m_2}{i} t_1^{m_1}t_2^{i} \otimes X_k(m-ie_2),\\
%\phi(1\cdot t^m\partial_2)&=  t_1^{m_1}t_2^{m_2}\partial_2\otimes 1+\sum_{i=0}^{m_2}\binom{m_2}{i} t_1^{m_1}t_2^{i} \otimes X_2(m-ie_2),\\
\phi(t^m\cdot 1)&=t^m\otimes 1,\endaligned$$  is an associative algebra isomorphism, where $m\in\Z\times \Z_+$, $k=1,2$.
\end{theorem}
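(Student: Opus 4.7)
The plan is to prove the isomorphism by constructing an explicit inverse $\psi\colon\mathcal{D}\otimes U(L)\to A\#U(\mg)$ and verifying that $\psi$ is a well-defined algebra homomorphism. On generators I would set $\psi(t_i\otimes 1)=t_i\cdot 1$ for $i=1,2$, $\psi(\partial_2\otimes 1)=1\cdot\partial_2$, $\psi(\partial_1\otimes 1)=t_1^{-1}\cdot t_1\partial_1$ (the trick highlighted in the introduction, required because $\phi(1\cdot\partial_1)$ carries a correction term involving $X_1(-1,0)$), and $\psi(1\otimes X_k(m))=X_k(m)$ viewed as an element of $A\#U(\mg)$ via (\ref{X}).

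To see that $\psi$ extends to a well-defined algebra homomorphism on $\mathcal{D}\otimes U(L)$ requires three verifications. First, the Weyl-algebra relations $[\partial_1,t_1]=[\partial_2,t_2]=1$ with all other generator commutators zero, which follow from a brief application of the smash-product cross-relation $X\cdot f-f\cdot X=X(f)\cdot 1$. Second, $X_k(m)\mapsto X_k(m)$ is tautologically a Lie homomorphism because $L$ was defined as the Lie subalgebra of $A\#U(\mg)$ spanned by these elements. Third, and most importantly, the images of $\mathcal{D}$ and of $U(L)$ must commute inside $A\#U(\mg)$. By design the elements $X_k(m)$ were constructed to commute with $t_1\cdot 1$, $t_2\cdot 1$, $1\cdot t_1\partial_1$, and $1\cdot\partial_2$; commutativity with $\psi(\partial_1)=t_1^{-1}\cdot t_1\partial_1$ is then automatic. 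I would establish the four distinguished commutation relations by direct expansion of (\ref{X}) using the cross-relation; the alternating signs $(-1)^i$ and binomial weights are precisely calibrated so that, for instance, $[1\cdot\partial_2,X_k(m)]$ telescopes to zero under the index shift $i\mapsto i-1$.

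With $\psi$ in hand, one checks $\phi\circ\psi=\mathrm{id}$ and $\psi\circ\phi=\mathrm{id}$ on generators. The checks on $t^m\otimes 1$, $\partial_k\otimes 1$, and $t^m\cdot 1$ are immediate substitutions. The substantive case is to compute $\phi(X_k(m))$ by expanding $X_k(m)$ as a sum of products of the generators $t^n\cdot 1$ and $1\cdot t^p\partial_k$ and then applying $\phi$ to each factor; after re-indexing by $s=i+j$, the resulting double sum collapses to $1\otimes X_k(m)$ via the Vandermonde-type identity $\binom{m_2}{i}\binom{m_2-i}{j}=\binom{m_2}{i+j}\binom{i+j}{i}$ combined with $\sum_{i=0}^{s}(-1)^i\binom{s}{i}=\delta_{s,0}$. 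The analogous combinatorial collapse handles $\psi\circ\phi$ on $1\cdot t^{m+\delta_{k1}e_1}\partial_k$.

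The principal obstacle is the third step of the verification of $\psi$ — the four commutation identities between the distinguished Weyl-algebra elements and each $X_k(m)$. These require careful bookkeeping of the two contributions arising from applying a derivation to either factor of the product $t_1^{-m_1}t_2^i\cdot t_1^{m_1+\delta_{k1}}t_2^{m_2-i}\partial_k$, with the alternating signs in (\ref{X}) enforcing the cancellation. Once these commutations are in hand, the rest of the argument is purely combinatorial and the isomorphism $\phi$ falls out.
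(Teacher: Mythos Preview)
Your approach is the paper's approach: the map you call $\psi$ is exactly the paper's $\rho$ (Lemma~4.2), with the same values on generators including the device $\psi(\partial_1\otimes 1)=t_1^{-1}\cdot t_1\partial_1$. The three verifications you list for $\psi$ (Weyl relations, tautological Lie homomorphism on $L$, and commutation of the images of $\mathcal D$ and $U(L)$ via the four distinguished bracket identities for $X_k(m)$) match the paper's treatment precisely, the latter being Lemma~3.1. The binomial collapse you describe for the composite maps is also what the paper carries out.

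There is, however, one genuine logical gap in your outline. When you ``compute $\phi(X_k(m))$ by expanding $X_k(m)$ as a sum of products \dots\ and then applying $\phi$ to each factor'', you are using $\phi$ multiplicatively; but at that point $\phi$ has only been \emph{specified} on algebra generators, not yet shown to extend to a homomorphism. Without that, your check of $\phi\circ\psi=\mathrm{id}$ on $1\otimes X_k(m)$ is circular, and you are left only with surjectivity of $\psi$ (from $\psi\circ\phi=\mathrm{id}$ on generators of $A\#U(\mg)$) and no argument for injectivity. The paper closes this gap with a separate lemma (Lemma~4.1) directly verifying that $\phi$ preserves the bracket relations of $\mg$ and the cross-relations with $A$; only then do the mutual-inverse checks on generators finish the proof. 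You should either insert that verification, or replace the $\phi\circ\psi$ step by an independent injectivity argument for $\psi$ (for instance via a PBW/filtration comparison), which you have not provided.
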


The isomorphism in Theorem \ref{mainth} tells us that there is  a close relationship
between vector field Lie algebras and Weyl algebras.

\section{The structure of $L$}
In this section, we study the structure of the Lie algebra $L$. We also classify finite dimensional irreducible modules over $L$.

In order to break up $A\#U(\mg)$  into
subalgebras: $\mathcal{D}$ and $ U(L)$,  which commute with each other, we need give the following lemma.

\begin{lemma}\label{D-L} For $m\in\Z\times \Z_+$, $k=1,2$ we have
\begin{enumerate}[$($a$)$]
\item  $[ X_k(m), t_1]=[ X_k(m), t_2]=0$.
\item $[ X_k(m), t_1\partial_1]=[ X_k(m), \partial_2]=0$.
\end{enumerate}
\end{lemma}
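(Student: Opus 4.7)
The plan is to expand each commutator directly in the smash product $A\#U(\mg)$, using the defining relation $Y\cdot f = f\cdot Y + Y(f)$ for $Y\in\mg$ and $f\in A$. A short calculation with this relation yields the general bracket formula
\[
[f\cdot Y,\, g\cdot Z] \;=\; fY(g)\cdot Z \;-\; gZ(f)\cdot Y \;+\; fg\cdot [Y,Z],
\]
valid for $f,g\in A$ and $Y,Z\in\mg$. I will apply this formula termwise to the two pieces of $X_k(m)$: the main sum $\sum_{i=0}^{m_2}(-1)^i\binom{m_2}{i} f_i\cdot Y_i$ with $f_i = t_1^{-m_1}t_2^i$ and $Y_i = t_1^{m_1+\delta_{k1}}t_2^{m_2-i}\partial_k$, and the correction term $-\delta_{m_2,0}\cdot 1\cdot t_1^{\delta_{k1}}\partial_k$.

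For part (a), $g\in\{t_1,t_2\}$ lies in $A$, so the bracket formula collapses to $[f_i\cdot Y_i,\, g] = f_i Y_i(g)$. A direct computation gives $f_i Y_i(t_j) = \delta_{kj}\, t_1^{\delta_{k1}} t_2^{m_2}$, which is independent of $i$. Factoring this out leaves the numerical sum $\sum_{i=0}^{m_2}(-1)^i\binom{m_2}{i} = \delta_{m_2,0}$, so the main sum contributes $\delta_{kj}\delta_{m_2,0}\, t_1^{\delta_{k1}}$. The commutator of the correction term with $t_j$ is $-\delta_{m_2,0}\delta_{kj}\, t_1^{\delta_{k1}}$, and the two pieces cancel.

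For the commutator with $t_1\partial_1$ in part (b), I expect each summand to vanish on its own. Both $f_i$ and $Y_i$ are eigenvectors under $\ad(t_1\partial_1)$: one checks $(t_1\partial_1)(f_i) = -m_1 f_i$, while the Lie bracket of $\mg$ gives $[Y_i, t_1\partial_1] = -m_1 Y_i$ (the $\delta_{k1}$ contributions cancel between $\beta_k$ and $\alpha_1$ in the bracket formula for $\mg$). Substituting into the bracket formula yields $m_1 f_i\cdot Y_i - m_1 f_i\cdot Y_i = 0$. The correction term also commutes with $t_1\partial_1$ because $[t_1^{\delta_{k1}}\partial_k, t_1\partial_1] = 0$ already in $\mg$.

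The commutator with $\partial_2$ is where the real work happens, since individual summands of $X_k(m)$ do not commute with $\partial_2$. I would compute $\partial_2(f_i) = i\, t_1^{-m_1}t_2^{i-1}$ and $[Y_i,\partial_2] = -(m_2-i)\, t_1^{m_1+\delta_{k1}}t_2^{m_2-i-1}\partial_k$, producing two sums whose terms differ by a shift in the $t_2$-exponent. The key input is the pair of identities $i\binom{m_2}{i} = m_2\binom{m_2-1}{i-1}$ and $(m_2-i)\binom{m_2}{i} = m_2\binom{m_2-1}{i}$; after reindexing $i = j+1$ in the first, both sums reduce to
\[
m_2 \sum_{j=0}^{m_2-1}(-1)^j\binom{m_2-1}{j}\, t_1^{-m_1}t_2^j \cdot t_1^{m_1+\delta_{k1}}t_2^{m_2-1-j}\partial_k,
\]
and they cancel. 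The correction term contributes zero since $[t_1^{\delta_{k1}}\partial_k,\partial_2] = 0$ in $\mg$. The careful reindexing in this last case is the only real obstacle; everything else is a mechanical application of the smash-product relations.
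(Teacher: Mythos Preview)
Your proof is correct and follows essentially the same route as the paper's. The only cosmetic difference is in the $[X_k(m),t_1\partial_1]$ case: the paper dispatches this in one line by observing that $X_k(m)$ is homogeneous of degree zero for the grading derivation $t_1\partial_1$, while you verify the same fact termwise by checking that $f_i$ and $Y_i$ have opposite $t_1$-degrees $-m_1$ and $m_1$; these are the same argument, yours being the unpacked version. For part~(a) and for $[X_k(m),\partial_2]$ your computations and the binomial identities you invoke match the paper's exactly.
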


\begin{proof}(a). It is easy to see that $[X_k(m), t_i]=0$ if $k\neq i$.

For $k=1,2$, we have that
$$\aligned  \ [X_k(m), t_k]&=[\sum_{i=0}^{m_2}(-1)^i\binom{m_2}{i}t_1^{-m_1}t_2^i\cdot t_1^{m_1+\delta_{k1}}t_2^{m_2-i}\partial_k-
 \delta_{m_2,0}t_1^{\delta_{k1}}\partial_k,t_k]\\
&= \sum_{i=0}^{m_2}(-1)^i\binom{m_2}{i}t_1^{-m_1}t_2^i\cdot t_1^{m_1+\delta_{k1}}t_2^{m_2-i}-
 \delta_{m_2,0}t_1^{\delta_{k1}}\\
&= \sum_{i=0}^{m_2}(-1)^i\binom{m_2}{i}t_1^{\delta_{k1}}t_2^{m_2}-
 \delta_{m_2,0}t_1^{\delta_{k1}}=0.
\endaligned$$

(b). The equality  $[X_k(m), t_1\partial_1]=0$ follows from the fact that $X_k(m)$ is a homogenous element of degree zero with respect to the degree derivation $t_1\partial_1$.

From $(i+1)\binom{m_2}{i+1}=(m_2-i)\binom{m_2}{i}$,
we can check  that
$$\aligned  \ [X_k(m), \partial_2]
%&=[\sum_{i=0}^{m_2}(-1)^i\binom{m_2}{i}t_1^{-m_1}t_2^i\cdot t_1^{m_1+\delta_{k1}}t_2^{m_2-i}\partial_k,\partial_2]\\
&=[\sum_{i=0}^{m_2}(-1)^i\binom{m_2}{i}t_1^{-m_1}t_2^i,\partial_2]\cdot t_1^{m_1+\delta_{k1}}t_2^{m_2-i}\partial_k\\
&\ \ +\sum_{i=0}^{m_2}(-1)^i\binom{m_2}{i}t_1^{-m_1}t_2^i\cdot [ t_1^{m_1+\delta_{k1}}t_2^{m_2-i}\partial_k,\partial_2]\\
&=\sum_{i=0}^{m_2}i(-1)^i\binom{m_2}{i}t_1^{-m_1}t_2^{i-1}\cdot t_1^{m_1+\delta_{k1}}t_2^{m_2-i}\partial_k\\
&\ \ -\sum_{i=0}^{m_2}(m_2-i)(-1)^i\binom{m_2}{i}t_1^{-m_1}t_2^i\cdot t_1^{m_1+\delta_{k1}}t_2^{m_2-i-1}\partial_k\\
%&=-\sum_{i=0}^{m_2-1}(i+1)(-1)^{i+1}\binom{m_2}{i+1}t_1^{-m_1}t_2^{i}\cdot t_1^{m_1+\delta_{k1}}t_2^{m_2-i-1}\partial_k\\
%&-\sum_{i=0}^{m_2-1}(-1)^i(m_2-i)\binom{m_2}{i}t_1^{-m_1}t_2^i\cdot t_1^{m_1+\delta_{k1}}t_2^{m_2-i-1}\partial_k\\
%&= \sum_{i=0}^{m_2-1}(-1)^i\big((i+1)\binom{m_2}{i+1}-(m_2-i)\binom{m_2}{i}\big)
%t_1^{-m_1}t_2^{i}\cdot t_1^{m_1+\delta_{k1}}t_2^{m_2-i-1}\partial_k\\
&=0. \endaligned$$
We complete the proof of Lemma \ref{D-L}.
\end{proof}

By straightforward  calculations, we give the Lie bracket of $L$  in the following lemma.

\begin{lemma}\label{lemma-cr}For $m, s\in\Z\times \Z_+$, $k=1,2$, we have that
\begin{enumerate}[$($a$)$]
\item  $[X_1(m), X_1(s)]=m_1\delta_{s_2,0}X_1(m)-s_1\delta_{m_2,0}X_1(s)
    +(s_1-m_1)X_1(m+s)$.
\item $[X_2(m), X_2(s)]=-s_2\delta_{m_2,0}X_2(s-e_2)+m_2\delta_{s_2,0}X_2(m-e_2)+ (s_2-m_2)X_2(m+s-e_2).$
\item $[X_1(m), X_2(s)]= -s_1\delta_{m_2,0}X_2(s)+m_2\delta_{s_2,0}X_1(m-e_2)
+s_1 X_2(m+s)-m_2X_1(m+s-e_2)$.
\end{enumerate}
\end{lemma}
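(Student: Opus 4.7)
The plan is a direct computation inside $A\#U(\mg)$, based on the single commutator identity
$$[f \cdot X,\, g \cdot Y] = fg \cdot [X, Y] + f\,X(g) \cdot Y - g\,Y(f) \cdot X, \qquad f, g \in A, \; X, Y \in \mg,$$
which follows from the smash product relation $X \cdot f = f \cdot X + X(f) \cdot 1$. First I would decompose $X_k(m) = Y_k(m) - \delta_{m_2,0}\cdot 1 \cdot t_1^{\delta_{k1}}\partial_k$, where $Y_k(m)$ is the binomial double-sum piece of \eqref{X}. Then $[X_k(m), X_l(s)]$ splits into four pieces: the main term $[Y_k(m), Y_l(s)]$, two cross brackets between a $Y$ and a correction element (each active only when one of $m_2, s_2$ is zero), and the bracket of the two corrections. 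The last is either zero or itself a correction-type term, while the two cross brackets supply precisely the $\delta_{m_2,0}$ and $\delta_{s_2,0}$ contributions visible on the right-hand side of (a)--(c).

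The main work is the expansion of $[Y_k(m), Y_l(s)]$ as a double sum indexed by $(i,j)$. Applied termwise, the commutator identity produces three contributions per summand, all of whose derivatives are easy: $t_1^{a}\partial_1$ acts on $t_1^{b}$ as multiplication by $b\,t_1^{a+b-1}$, and $\partial_2$ acts on $t_2^{j}$ as multiplication by $j\,t_2^{j-1}$. After collecting factors, each contribution takes the shape $t_1^{-m_1-s_1}t_2^{i+j} \cdot t_1^{m_1+s_1+\delta_{r1}} t_2^{N-i-j}\partial_r$ for an appropriate $r \in \{k, l\}$ and a total $t_2$-exponent $N$. Reindexing by $n = i+j$ and invoking the Chu--Vandermonde identity
$$\sum_{i+j=n}(-1)^{i+j}\binom{m_2}{i}\binom{s_2}{j} = (-1)^n\binom{m_2+s_2}{n},$$
together with its minor variants obtained by pulling factors of $i$ or $m_2 - i$ outside the sum, collapses the inner sum into a single binomial coefficient matching the defining sum of $Y_r(m+s)$ or $Y_r(m+s - e_2)$.

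Finally, I would compare the resulting linear combination of $Y_r$'s and correction elements with the $X_r$'s on the right-hand side of (a)--(c), absorbing the $\delta_{\cdot,0} \cdot t_1^{\delta_{r1}}\partial_r$ pieces back into $X_r(\cdot)$. The distinction between (a), (b), (c) is governed solely by how $\partial_k$ and $\partial_l$ differentiate the monomials: the shift by $-e_2$ appearing in (b) and (c) is produced by $\partial_2$ acting on $t_2^{j}$, whereas the $s_1-m_1$ factor in (a) comes from $\partial_1$ acting on $t_1$-powers. The main obstacle is the combinatorial bookkeeping --- tracking the three contributions per summand, the sign conventions, and the separation of the $\delta_{\cdot,0}$ pieces --- rather than any conceptual difficulty. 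An alternative and arguably cleaner route becomes available once the isomorphism $\theta:L\to\mm_{1,0}\Delta$ of Lemma \ref{lemma-hom} is established, since it reduces the three identities to vector-field bracket computations at the point $(1,0)$; but the direct expansion above is what the phrasing "by straightforward calculations" invites.
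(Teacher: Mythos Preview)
Your proposal is correct and matches the paper's approach: the paper offers no proof beyond the phrase ``by straightforward calculations,'' and your outline is precisely a correct execution of those calculations via the smash-product commutator identity and Vandermonde-type binomial collapses. One small caution on your closing remark: in the paper's logical order Lemma~\ref{lemma-hom} is \emph{proved} using the bracket formulas of Lemma~\ref{lemma-cr}, so invoking $\theta$ as an alternative route would be circular unless you first gave an independent proof that $\theta$ is a homomorphism.
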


Let $\mathfrak{m}_{1,0}$ be the maximal ideal of  $A$ generated by
 $t_1-1,t_2$ and $\Delta=\span\{ \frac{\partial}{\partial t_1},\frac{\partial}{\partial t_2}\}$. Then $\mm_{1,0}\Delta$ is a Lie subalgebra of $\mg$.

Inspired by the isomorphism $\psi$ in \cite{XL2} for  Lie superalgebras, we have the following isomorphism for the Lie algebra $L$.

\begin{lemma}\label{lemma-hom} The linear map $\theta: L\rightarrow \mm_{1,0}\Delta$ defined by
$$\theta(X_1(m))=(t^{m}-\delta_{m_2,0}) t_1\frac{\partial}{\partial t_1},\ \
\theta(X_2(m))=(t^{m}-\delta_{m_2,0}) \frac{\partial}{\partial t_2},$$
 is a Lie algebra isomorphism, where $m=(m_1,m_2)\in\Z\times \Z_+,  k=1,2$.
\end{lemma}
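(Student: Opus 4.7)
The key observation driving the proof is the uniform formula
\[\theta(X_k(m)) = f_m Z_k,\]
where $f_m := t^m - \delta_{m_2, 0} \in A$ and $Z_1 := t_1\p_1$, $Z_2 := \p_2$ are two commuting elements of $\mg$. Since $f_m$ vanishes at the point $(1, 0)$, each $f_m$ lies in $\mm_{1, 0}$, so the image of $\theta$ sits inside $\mm_{1, 0}\Delta$ automatically. Moreover $f_{(0,0)} = 0$ is consistent with $X_k(0,0) = 0$, so the assignment is well posed.

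First I would verify that $\theta$ is a linear isomorphism. The set $\{X_k(m) : k = 1,2,\ m \in \Z \times \Z_+ \setminus \{(0,0)\}\}$ is a basis of $L$: starting from the explicit expression \eqref{X} in $A \otimes U(\mg) \cong A\#U(\mg)$, a hypothetical linear relation is annihilated by extracting the coefficient of the PBW tensor $t_1^{-m_1} t_2^i \otimes t_1^{m_1+\delta_{k1}} t_2^{m_2 - i}\p_k$, which forces every scalar coefficient to vanish. The corresponding images $\{f_m Z_k\}$ then form a basis of $\mm_{1,0}\Delta$, because $\{f_m : m \neq (0,0)\}$ is the Taylor basis of the maximal ideal $\mm_{1,0}$, and multiplication by the unit $t_1$ bijects $\mm_{1,0}$ onto itself (so that $f_m Z_1 = (t_1 f_m)\p_1$ still spans $\mm_{1,0}\p_1$).

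For the bracket preservation, the commutativity $[Z_1, Z_2] = 0$ gives the compact identity
\[[f_m Z_k,\ f_s Z_{k'}] = f_m\, Z_k(f_s)\, Z_{k'} - f_s\, Z_{k'}(f_m)\, Z_k.\]
Plugging in $Z_1(f_s) = s_1 t^s$ and $Z_2(f_s) = s_2 t^{s - e_2}$, and then rewriting every $t^a$ on the right as $f_a + \delta_{a_2, 0}$, the constant corrections (of shapes $\delta_{m_2,0}\delta_{s_2,0}$ for cases (a),(b) and $\delta_{m_2+s_2-1,0}$ for case (c)) cancel pairwise, and the surviving terms reproduce Lemma \ref{lemma-cr}(a)--(c) term by term.

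The main obstacle is the bookkeeping in case (c), where the derivation $\p_2$ shifts the index by $e_2$: one must verify that the boundary contributions $\delta_{m_2+s_2-1,0}$, which are nonzero only at the corners $(m_2, s_2) \in \{(1, 0), (0, 1)\}$, cancel against the analogous $\delta$-pieces hidden in $f_{m - e_2}$ and $f_{s - e_2}$. Once this cancellation is checked in each of the two corners, the matching with Lemma \ref{lemma-cr} becomes mechanical and the proof is complete.
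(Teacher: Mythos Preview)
Your proposal is correct and follows essentially the same approach as the paper: verify that $\theta$ respects the three bracket relations of Lemma~\ref{lemma-cr} by direct computation, and then check bijectivity by observing that $\{f_m:m\neq(0,0)\}$ is a basis of $\mm_{1,0}$. Your notation $f_m$, $Z_k$ and the observation $[Z_1,Z_2]=0$ streamline the bracket check compared with the paper's bare-hands computation, and your argument for linear independence of the $X_k(m)$ is more explicit than the paper's, but the logical skeleton is identical.
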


\begin{proof}
For any $m,s\in \Z\times \Z_+$, we can check that
 $$\aligned \theta&([X_1(m), X_2(s)])\\
&= -s_1\delta_{m_2,0}(t^{s}-\delta_{s_2,0})\frac{\partial}{\partial t_2}+m_2\delta_{s_2,0}
(t^{m-e_2}-\delta_{m_2-1,0})t_1\frac{\partial}{\partial t_1}
\\
&\qquad +s_1 (t^{m+s}-\delta_{m_2+s_2,0})\frac{\partial}{\partial t_2}
 -m_2
(t^{m+s-e_2}-\delta_{m_2+s_2-1,0})t_1\frac{\partial}{\partial t_1}\\
&=s_1 (t^{m+s}-\delta_{m_2,0}t^s )\frac{\partial}{\partial t_2}-m_2(t^{s+m-e_2}-\delta_{s_2,0}t^{m-e_2})_1\frac{\partial}{\partial t_1}\\
&= [(t^{m}-\delta_{m_2,0}) t_1\frac{\partial}{\partial t_1},(t^{s}-\delta_{s_2,0}) \frac{\partial}{\partial t_2} ]\\
&= [\theta(X_1(m)), \theta(X_2(s))].
\endaligned$$

Similarly, we can verify that
$$\aligned\  & \theta([X_1(m), X_1(s)])
=[ \theta(X_1(m)),     \theta(X_1(s))],
        \endaligned $$
and
$$\aligned  & \theta([X_2(m), X_2(s)])
= [\theta(X_2(m)), \theta(X_2(s))].
\endaligned$$

Therefore the map $\theta$ is a homomorphism.
 Since $(t^{m}-\delta_{m_2,0}) t_1\frac{\partial}{\partial t_1}, (t^{m}-\delta_{m_2,0}) \frac{\partial}{\partial t_2}$ are linearly independent for different $m,s\in \Z\times \Z_+$,
$\theta$ is injective. Moreover we can see that $\mm_{1,0}\Delta$  is spanned by
 $(t^{m}-\delta_{m_2,0}) t_1\frac{\partial}{\partial t_1}, (t^{m}-\delta_{m_2,0}) \frac{\partial}{\partial t_2}$ with $m,s\in \Z\times \Z_+$. So $\theta$ is surjective, and hence it is an isomorphism.

\end{proof}

\begin{lemma}\label{iso-gl2} The linear map $\pi: \mm_{1,0}\Delta/\mm_{1,0}^2 \Delta \rightarrow \gl_2$ such that
 $$\aligned
&\pi( (t_1-1)\frac{\partial}{\partial t_1})= E_{11},
 \pi( (t_1-1)\frac{\partial}{\partial t_2}) = E_{12},\\
 &\pi(  t_2\frac{\partial}{\partial t_2})=E_{22},
  \pi(  t_2\frac{\partial}{\partial t_1})= E_{21},
 \endaligned$$
 is a Lie algebra isomorphism.
\end{lemma}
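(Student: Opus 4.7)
The plan is to show that $\pi$ is a Lie algebra isomorphism in three steps: (i) verify that $\mm_{1,0}^2\Delta$ is a Lie ideal of $\mm_{1,0}\Delta$ so that the quotient carries an induced Lie bracket, (ii) identify the quotient as a $4$-dimensional vector space with the expected basis, and (iii) match structure constants directly with those of $\gl_2$.

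First, I would check that $\mm_{1,0}^2\Delta$ is a Lie ideal. For $f\partial_i \in \mm_{1,0}^2\Delta$ and $g\partial_j \in \mm_{1,0}\Delta$, the Leibniz formula gives $[f\partial_i, g\partial_j] = f\partial_i(g)\partial_j - g\partial_j(f)\partial_i$. Since $\mm_{1,0}^2$ is an $A$-ideal, $f\partial_i(g)\in\mm_{1,0}^2$; and since $\partial_j(f)\in\mm_{1,0}$ when $f\in\mm_{1,0}^2$, we also have $g\partial_j(f)\in\mm_{1,0}^2$. Hence $[\mm_{1,0}^2\Delta,\mm_{1,0}\Delta]\subseteq \mm_{1,0}^2\Delta$ and the quotient is a Lie algebra.

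Next, since $\Delta$ is a free $A$-module of rank $2$ with basis $\{\partial_1,\partial_2\}$, the vector space $\mm_{1,0}\Delta/\mm_{1,0}^2\Delta$ is naturally isomorphic to $(\mm_{1,0}/\mm_{1,0}^2)\otimes_{\C}\Delta$. The cotangent space $\mm_{1,0}/\mm_{1,0}^2$ at the point $(1,0)$ is $2$-dimensional with basis $\{t_1-1,t_2\}$, so the quotient is $4$-dimensional with basis $\{(t_1-1)\partial_1,(t_1-1)\partial_2,t_2\partial_1,t_2\partial_2\}$. The map $\pi$ sends this basis bijectively onto $\{E_{11},E_{12},E_{21},E_{22}\}$, so it is at least a linear isomorphism.

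Finally, to verify that $\pi$ preserves brackets, I would exploit the fact that modulo $\mm_{1,0}^2\Delta$ only the constant terms of $\partial_i(g)$ and $\partial_j(f)$ survive. Writing $x_1=t_1-1$ and $x_2=t_2$ so that $\partial_i(x_k)(1,0)=\delta_{ik}$, one splits $\partial_i(g) = \partial_i(g)(1,0) + h$ with $h\in\mm_{1,0}$ and observes that $fh\in\mm_{1,0}^2$. This collapses the bracket to
$$[x_i\partial_j, x_k\partial_l] \equiv \delta_{jk}\, x_i\partial_l - \delta_{li}\, x_k\partial_j \pmod{\mm_{1,0}^2\Delta},$$
which is precisely the rule $[E_{ij},E_{kl}] = \delta_{jk}E_{il} - \delta_{li}E_{kj}$ in $\gl_2$ under $\pi(x_i\partial_j)=E_{ij}$. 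The only delicate point, really bookkeeping rather than an obstacle, is tracking the splitting and checking the $h$-contribution vanishes in the quotient.
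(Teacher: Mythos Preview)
Your proof is correct. In fact, the paper states this lemma without proof, presumably because it is a standard observation; your argument supplies exactly the details one would expect.

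One minor remark: step (iii) is actually simpler than you indicate. With $x_1=t_1-1$ and $x_2=t_2$ you have $\partial_j(x_k)=\delta_{jk}$ \emph{exactly} (not merely at the point $(1,0)$), so
\[
[x_i\partial_j,\,x_k\partial_l]=x_i\partial_j(x_k)\partial_l-x_k\partial_l(x_i)\partial_j=\delta_{jk}x_i\partial_l-\delta_{li}x_k\partial_j
\]
holds on the nose in $\mm_{1,0}\Delta$, with no error term in $\mm_{1,0}^2\Delta$ to track. The splitting you set up would be needed for a general choice of generators of $\mm_{1,0}$, but for these particular coordinates it is unnecessary.
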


\begin{lemma}\label{lemma-ideal}
If $M$ is a finite dimensional irreducible $\mm_{1,0}\Delta$-module. Then $ \mm_{1,0}^2 \Delta M=0$. So $M $ is an irreducible $\gl_2$-module
via the isomorphism in Lemma \ref{iso-gl2}.
\end{lemma}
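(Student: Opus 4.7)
The plan is to show that $M_0 := \{v \in M : \mm_{1,0}^2\Delta \cdot v = 0\}$ equals all of $M$. The Leibniz rule together with the observation that partial derivatives send $\mm_{1,0}^k$ into $\mm_{1,0}^{k-1}$ gives $[\mm_{1,0}\Delta,\mm_{1,0}^k\Delta]\subseteq \mm_{1,0}^k\Delta$, so $\mm_{1,0}^2\Delta$ is a Lie ideal and $M_0$ is an $\mm_{1,0}\Delta$-submodule. By irreducibility it suffices to show $M_0\neq 0$; once $M_0=M$, the second conclusion follows from the algebra isomorphism in Lemma \ref{iso-gl2}, since every irreducible $\gl_2$-module lifts uniquely to one of $\mm_{1,0}\Delta$ via the quotient.

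The main device is the Euler element $D:=(t_1-1)\partial_1+t_2\partial_2\in\mm_{1,0}\Delta$, which under $\pi$ corresponds to the central element $E_{11}+E_{22}$ of $\gl_2$. Expanding $f\in\mm_{1,0}^k$ in Taylor series in the local coordinates $u_1=t_1-1,\;u_2=t_2$ yields the key filtered relation
$$[D, x] \equiv (k-1)\, x \pmod{\mm_{1,0}^{k+1}\Delta}, \qquad x \in \mm_{1,0}^k\Delta.$$
Decompose $M=\bigoplus_{\mu\in\Lambda} M_\mu$ into generalized $D$-eigenspaces with $\Lambda\subset\mathbb{C}$ finite, pick $\mu^*\in\Lambda$ of maximal real part, and choose a nonzero true eigenvector $v\in\ker(D-\mu^*)|_{M_{\mu^*}}$. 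For every integer $j\geq 1$ we have $\mu^*+j\notin\Lambda$, so $D-(\mu^*+j)$ is invertible on $M$.

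For $x\in\mm_{1,0}^k\Delta$ with $k\geq 2$, writing $[D,x]=(k-1)x+y$ with $y\in\mm_{1,0}^{k+1}\Delta$ and using $Dv=\mu^* v$ gives the shift identity $(D-(\mu^*+k-1))(xv)=yv$; iterating produces
$$\prod_{j=0}^{n-1}\bigl(D-(\mu^*+k-1+j)\bigr)(xv)=y^{(n)} v,\qquad y^{(n)}\in \mm_{1,0}^{k+n}\Delta.$$
Every factor on the left is invertible on $M$, so $xv$ vanishes as soon as the right-hand side does for some $n$. This reduces the lemma to proving $\rho(\mm_{1,0}^N\Delta)=0$ for some $N$.

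The main obstacle is this last step. I would prove it by examining the descending chain $\rho(\mm_{1,0}^k\Delta)\cdot M$; each term is an $\mm_{1,0}\Delta$-submodule by the ideal property, so the chain stabilizes at a submodule $U$ which is either $0$ or $M$. The case $U=0$ supplies $N$. To rule out $U=M$, note that stabilization forces $\rho(\mm_{1,0}^k\Delta)=W$ to be the same finite-dimensional subspace of $\mathfrak{gl}(M)$ for all large $k$; applying the iterated shift identity to express any $v\in M$ as a sum $\sum \rho(x_i)v_i$ with $x_i\in\mm_{1,0}^N\Delta$ and $v_i$ a true $D$-eigenvector of eigenvalue $\nu_i$, one has $\rho(x_i)v_i=\bigl[\prod_{j=0}^{n-1}(D-(\nu_i+N-1+j))\bigr]^{-1}\rho(y_i^{(n)})v_i$ for every $n$. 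The vectors $\rho(y_i^{(n)})v_i$ lie in the bounded set $Wv_i$, while the inverted product shrinks in operator norm (its eigenvalues grow factorially once $N$ exceeds the diameter of $\Lambda$), forcing each $\rho(x_i)v_i=0$ and hence $v=0$ for every $v\in M$, a contradiction.
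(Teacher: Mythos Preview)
Your overall strategy matches the paper's: show that the annihilator $M_0$ of $\mm_{1,0}^2\Delta$ is a nonzero submodule by exhibiting a vector killed by $\mm_{1,0}^2\Delta$. The filtered relation $[D,x]\equiv(k-1)x\pmod{\mm_{1,0}^{k+1}\Delta}$ and the resulting shift identity on a true $D$-eigenvector are correct. The problem is in the last paragraph.

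The assertion that $\rho(y_i^{(n)})v_i$ ``lies in the bounded set $Wv_i$'' is where the argument breaks. The set $Wv_i$ is a linear subspace of $M$, not a bounded set; knowing that $\rho(y_i^{(n)})\in W$ for all large $n$ gives no bound on $\|\rho(y_i^{(n)})\|$. Indeed the recursion $\rho(y^{(n)})=(\operatorname{ad}\rho(D)-n)\,\rho(y^{(n-1)})$ typically forces $\|\rho(y^{(n)})\|$ to grow on the order of $n!$, which exactly cancels the $1/n!$ decay of $\bigl\|\prod_j(D-(\nu_i+N-1+j))^{-1}\bigr\|$, so the limit does not force $\rho(x_i)v_i=0$. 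A secondary gap: writing an arbitrary $v\in M$ as $\sum\rho(x_i)v_i$ with the $v_i$ \emph{true} $D$-eigenvectors presupposes that $D$ is diagonalizable on $M$, which you have not established.

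The paper avoids any analytic step by first restricting to the polynomial subalgebra $\mm_{1,0}^+\Delta$, where $\mm_{1,0}^+=\mm_{1,0}\cap\C[t_1,t_2]$. There $\operatorname{ad}d$ gives an honest $\Z_{\ge0}$-grading rather than a filtration: the piece $(\mm_{1,0}^+)_i\Delta$ carries each generalized $d$-eigenspace $M_\mu$ into $M_{\mu+i-1}$, so once $i-1$ exceeds every difference of $d$-eigenvalues it annihilates $M$. A short bracket computation then upgrades $(\mm_{1,0}^+)^l\Delta\,M=0$ to $\mm_{1,0}^{l+1}\Delta\,M=0$, after which $M$ becomes an irreducible module for the graded quotient $\mm_{1,0}^+\Delta/(\mm_{1,0}^+)^{l+1}\Delta$. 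Since $d$ is central in the degree-zero part ($\cong\gl_2$), it acts as a scalar on $M$, and the strictly positively graded ideal $\mm_{1,0}^2\Delta$ then visibly annihilates $M$. This replaces your norm argument with a purely algebraic weight-shift.
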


\begin{proof}  Let $A^+=\C[t_1,t_2], \mm^+_{1,0}=A^+\cap \mm_{1,0}$,
 and $d= (t_1-1)\frac{\partial}{\partial t_1}+t_2\frac{\partial}{\partial t_2}$.
 Then $\mm^+_{1,0}=\bigoplus_{i\in \Z_+} (\mm^+_{1,0})_i$ is a $\Z_+$-graded Lie algebra with respect to the adjoint action of $d$, where
 $(\mm^+_{1,0})_i=\{X\in \mm^+_{1,0} \mid  [d,X]=iX\}$. More precisely,
   $$(\mm^+_{1,0})_i=\span\{(t_1-1)^{m_1}t_2^{m_2}
\mid m_1,m_2\in \Z_+, m_1+m_2=i \}.$$

   Since $M$ is  finite dimensional, the action of  $d$  on $M$ has finite eigenvalues, hence there is some integer $l \geq 2$ such that $(\mm^+_{1,0})_{l+i} \Delta M=0$ for any $i\in\Z_+$. So   $(\mm^+_{1,0})^{l} \Delta M=0$.  Let $I$ be the ideal of
  $\mm_{1,0}\Delta$ generated by $ (\mm^+_{1,0})^{l} \Delta$ . Then $I M=0$.

   For any $k\in \Z, u_1\in (\mm^+_{1,0})_1, v_l\in (\mm^+_{1,0})^{l} $, we have

   $$\aligned \ \  [t_1^{k}u_1 d, v_l d]-[t_1^{k} d, u_1v_l d]
   &=t_1^{k}u_1 d(v_l) d-v_l(d(t_1^k)u_1+t_1^ku_1)d\\
   & \quad -  t_1^{k} (u_1v_l+u_1d(v_l)) d+u_1v_ld(t_1^k)d\\
   &=-2t_1^ku_1v_ld\in I.
   \endaligned$$
   So $\mm_{1,0}^{l+1} d M=0$.
   Then  $[v_ld, u_1\frac{\partial}{\partial t_i}]+u_1\frac{\partial v_l}{\partial t_i} d=v_lu_1\frac{\partial }{\partial t_i}\in I$,  for any $ v_l\in (\mm^+_{1,0})^{l} ,i=1,2$.
   Hence $\mm_{1,0}^{l+1}  \Delta M=0$.
   Consequently $M$ is an irreducible module over the quotient algebra $\mm_{1,0}\Delta  /\mm_{1,0}^{l+1} \Delta $.
   Since $\mm_{1,0}^+\Delta  /(\mm^+_{1,0})^{l+1}  \Delta \cong \mm_{1,0}\Delta  /\mm_{1,0}^{l+1} \Delta $, the  module $M$ is also an irreducible $\mm_{1,0}^+\Delta  /(\mm^+_{1,0})^{l+1} \Delta$-module. Since the adjoint action of $d$ on   $\mm_{1,0}^+\Delta $ is diagonalizable, so is  the action of $d$ on $M$.  From that the eigenvalues of $\text{ad} d$ on $\mm_{1,0}^2\Delta $ are all positive, $\mm_{1,0}^2\Delta M$ is a proper submodule of $M$. The irreducibility of $M$ forces that $\mm_{1,0}^2\Delta M=0$. Then we can complete the proof.

\end{proof}
By Lemma \ref{lemma-hom} and Lemma \ref{iso-gl2},  any irreducible $\gl_2$-module
$V$ can be lifted to an $L$-module denoted by $V_{\gl_2}^L$ in the following way:
$$\aligned X_k(i,0)v=iE_{1k}v, \qquad X_k(i,1)=E_{2k}v,  \qquad X_k(m_1,m_2)v=0,
\endaligned$$ where $v\in V, i, m_1\in\Z, k=1,2, m_2\in\Z_{\geq 2}$.

By Lemma \ref{lemma-ideal}, we obtain the classification of
finite dimensional irreducible $L$-modules.

\begin{theorem}\label{L-module}
 Let $M$ be a finite dimensional irreducible $L$-module.
Then   $M\cong V_{\gl_2}^L$  for some irreducible $\gl_2$-module.
\end{theorem}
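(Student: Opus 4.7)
The plan is to combine the three preceding lemmas in a direct way: transport $M$ across $\theta$ to an irreducible $\mm_{1,0}\Delta$-module, kill the square ideal by Lemma~\ref{lemma-ideal}, identify the quotient with $\gl_2$ via $\pi$, and then match formulas.

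First I would use the Lie algebra isomorphism $\theta:L\to\mm_{1,0}\Delta$ of Lemma~\ref{lemma-hom} to regard $M$ as a finite dimensional irreducible $\mm_{1,0}\Delta$-module. By Lemma~\ref{lemma-ideal}, the ideal $\mm_{1,0}^2\Delta$ acts trivially on $M$, so $M$ descends to an irreducible module over the quotient $\mm_{1,0}\Delta/\mm_{1,0}^2\Delta$, which is identified with $\gl_2$ through $\pi$ of Lemma~\ref{iso-gl2}. This already exhibits $M$ as an irreducible $\gl_2$-module $V$; what remains is to check that the resulting $L$-action on $V$ coincides with the one in the definition of $V_{\gl_2}^L$.

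The core of the proof is therefore a short calculation of $\theta(X_k(m))$ modulo $\mm_{1,0}^2\Delta$, using the expansions $t_1^{n}=1+n(t_1-1)+O((t_1-1)^2)$ and $t_2^{m_2}\in\mm_{1,0}^{m_2}$. Writing $\partial_1'=t_1\tfrac{\partial}{\partial t_1}$ and $\partial_2'=\tfrac{\partial}{\partial t_2}$, I expect to obtain
\begin{align*}
\theta(X_k(m_1,0)) &=(t_1^{m_1}-1)\,\partial_k' \equiv m_1(t_1-1)\tfrac{\partial}{\partial t_k} \pmod{\mm_{1,0}^2\Delta},\\
\theta(X_k(m_1,1)) &=t_1^{m_1}t_2\,\partial_k' \equiv t_2\tfrac{\partial}{\partial t_k} \pmod{\mm_{1,0}^2\Delta},\\
\theta(X_k(m_1,m_2)) &\in \mm_{1,0}^2\Delta \qquad (m_2\ge 2),
\end{align*}
since for $m_2\ge 2$ the prefactor $t_1^{m_1}t_2^{m_2}$ already lies in $\mm_{1,0}^2$, and for $m_2=1$ the correction $(t_1^{m_1}-1)t_2$ lies in $\mm_{1,0}^2$. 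Applying $\pi$ then converts the right hand sides into $m_1E_{1k}$, $E_{2k}$, and $0$ respectively, which is exactly the prescription for $V_{\gl_2}^L$.

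The only genuine work is bookkeeping in the binomial expansion; there is no real obstacle, because Lemma~\ref{lemma-ideal} has already done the structural heavy lifting (namely producing the nilpotency needed to pass from the infinite dimensional algebra $\mm_{1,0}\Delta$ to the finite dimensional quotient $\gl_2$). The subtlest point to keep straight is the use of the modified derivation $\partial_1'=t_1\tfrac{\partial}{\partial t_1}$ rather than $\tfrac{\partial}{\partial t_1}$ in $\theta$, which is what makes the $X_1(m_1,0)$ computation produce the coefficient $m_1$ cleanly (since $(t_1-1)t_1\equiv (t_1-1)\pmod{\mm_{1,0}^2}$); once this is noted, the matching of $L$-action with the $V_{\gl_2}^L$-formulas is immediate, and irreducibility transfers back to $M$ through the isomorphism $\theta$.
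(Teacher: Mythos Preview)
Your proposal is correct and follows essentially the same approach as the paper: transport through $\theta$, annihilate $\mm_{1,0}^2\Delta$ via Lemma~\ref{lemma-ideal}, and identify the quotient with $\gl_2$ via $\pi$. In fact you supply more detail than the paper, which simply states that Theorem~\ref{L-module} follows from Lemma~\ref{lemma-ideal} and leaves the verification of the $V_{\gl_2}^L$ formulas (your mod-$\mm_{1,0}^2\Delta$ computation) implicit in the definition preceding the theorem.
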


\section{The map $\varphi$ is an isomorphism}
In this section, we will  show that $\varphi$ is an isomorphism.
Then using the classification of irreducible finite dimensional
 modules over $L$, we give the classification of
irreducible jet  $\mg$-modules with finite dimensional
weight spaces.

\begin{lemma}\label{th-phi}
The linear map $\phi:A\# U(\mg)\rightarrow \mathcal{D}\otimes U(L)$ defined in Theorem \ref{mainth}  is an associative algebra homomorphism.
\end{lemma}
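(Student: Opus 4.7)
The plan is to exploit the universal property of the smash product: $A\#U(\mg)$ is the quotient of the free product $A * U(\mg)$ by the smash relation $X \cdot f - f \cdot X = X(f) \cdot 1$ for $X \in \mg$, $f \in A$. Consequently, it is enough to verify (i) $\phi|_A$ is an algebra homomorphism, (ii) $\phi|_\mg : \mg \to \mathcal{D} \otimes U(L)$ is a Lie algebra homomorphism (so extends to an algebra map on $U(\mg)$), and (iii) $[\phi(1 \cdot t^{m+\delta_{k1}e_1}\partial_k), \phi(t^n \cdot 1)] = \phi(t^{m+\delta_{k1}e_1}\partial_k(t^n) \cdot 1)$. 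Item (i) is immediate from $\phi(t^m \cdot 1) = t^m \otimes 1$.

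The crucial structural feature of the target is that $1 \otimes U(L)$ commutes with $\mathcal{D} \otimes 1$ inside $\mathcal{D}\otimes U(L)$. With this in mind, I would decompose $\phi(1 \cdot t^{m + \delta_{k1}e_1}\partial_k) = D_k(m) + Y_k(m)$, where $D_k(m) = t_1^{m_1 + \delta_{k1}}t_2^{m_2}\partial_k \otimes 1 \in \mathcal{D}\otimes 1$ and $Y_k(m) = \sum_{i=0}^{m_2} \binom{m_2}{i} t_1^{m_1}t_2^i \otimes X_k(m - ie_2) \in A \otimes U(L)$. For (iii), the commutator $[Y_k(m), t^n \otimes 1]$ vanishes: the $U(L)$-factor commutes with $\mathcal{D}$, and the $A$-factor commutes with $t^n$ inside $\mathcal{D}$. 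Only $[D_k(m), t^n \otimes 1]$ contributes, producing the Weyl-algebra bracket $n_k\, t^{m + \delta_{k1}e_1 + n - e_k} \otimes 1$, which matches the right-hand side directly.

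Item (ii) is the main obstacle. Expanding $[\phi(1 \cdot t^\alpha\partial_k), \phi(1 \cdot t^\beta\partial_j)]$ as $[D_k, D_j] + [D_k, Y_j] + [Y_k, D_j] + [Y_k, Y_j]$ decouples cleanly: the first is the Witt bracket in the Weyl algebra, the two cross terms reduce to commutators of a differential operator with a monomial (tensored with a single $X$), and the last collapses, since $\mathcal{D}$ commutes with $U(L)$ in the target, to $\sum_{i,\ell} \binom{m_2}{i}\binom{s_2}{\ell}\, t_1^{m_1 + s_1} t_2^{i + \ell} \otimes [X_k(m - ie_2), X_j(s - \ell e_2)]$, with the inner $L$-bracket furnished by Lemma \ref{lemma-cr}. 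I would then compute $\phi(1 \cdot [t^\alpha\partial_k, t^\beta\partial_j])$ directly from the $\mg$-bracket formula and match the two expressions in the $\mathcal{D}\otimes 1$-part and in the $A \otimes U(L)$-part separately. The technical heart is a binomial reconciliation, chiefly Vandermonde's identity $\sum_{i+\ell=r} \binom{m_2}{i}\binom{s_2}{\ell} = \binom{m_2+s_2}{r}$, combined with careful case analysis around the $\delta_{m_2,0}$ and $\delta_{s_2,0}$ terms arising from the correction $-\delta_{m_2,0}\cdot 1 \cdot t_1^{\delta_{k1}}\partial_k$ built into the definition of $X_k(m)$. Sorting the output according to its homogeneous type (pure Weyl vs.\ $A\otimes X$-terms of each bidegree) makes the cancellations transparent and yields the desired equality.
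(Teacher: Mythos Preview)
Your proposal is correct and follows essentially the same route as the paper: reduce to checking (i) $\phi|_A$ is multiplicative, (ii) $\phi|_\mg$ is a Lie homomorphism, and (iii) the smash relation, and for (ii) expand the bracket into the four pieces $[D,D]+[D,Y]+[Y,D]+[Y,Y]$, invoke Lemma~\ref{lemma-cr} for the last piece, and collapse the resulting double sums via Vandermonde while the $\delta$-terms cancel against the cross terms. The paper carries out exactly this computation (for the representative case $k=1$, $j=2$) without naming the decomposition $D_k(m)+Y_k(m)$ or Vandermonde explicitly; your write-up is simply a more structured articulation of the same argument.
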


\begin{proof} Clearly, the restricted map $\phi|_A: A\rightarrow\mathcal{D}$ is a homomorphism.
To show that $\phi$ is  a homomorphism, we also should check that
 $\phi$ preserves the defining relations of
  $\mg$.

For any $m,s\in\Z\times \Z_+$, we have that
$$\aligned
 &[\phi(1\cdot t^{m+e_1}\partial_1),\phi(1\cdot t^s\partial_2)]\\
&=[ t^{m+e_1}\partial_1\otimes 1,t^s\partial_2\otimes 1]+\sum_{i=0}^{m_2}\binom{m_2}{i} [t_1^{m_1}t_2^{i} ,t^s\partial_2]\otimes X_1(m-ie_2),\\
&\ \  +\sum_{j=0}^{s_2}\binom{s_2}{j} [t_1^{m_1+1}t_2^{m_2}\partial_1,t_1^{s_1}t_2^{j}  ]\otimes X_2(s-je_2)\\
&\ \ +  \sum_{i=0}^{m_2}\sum_{j=0}^{s_2}\binom{m_2}{i}\binom{s_2}{j}t_1^{m_1+s_1}t_2^{i+j} \otimes [X_1(m-ie_2), X_2(s-je_2)]\\
&=[ t^{m+e_1}\partial_1\otimes 1,t^s\partial_2\otimes 1]-\sum_{i=0}^{m_2}\binom{m_2}{i}i t_1^{m_1+s_1}t_2^{s_2+i-1} \otimes X_1(m-ie_2),\\
&\ \  +\sum_{j=0}^{s_2}\binom{s_2}{j} s_1 t_1^{m_1+s_1}t_2^{m_2+j}  \otimes X_2(s-je_2)\\
%\endaligned$$
%$$\aligned
&\ \ +  \sum_{i=0}^{m_2}\sum_{j=0}^{s_2}\binom{m_2}{i}\binom{s_2}{j}
t_1^{m_1+s_1}t_2^{i+j} \otimes \\
&\ \  \Big(-s_1\delta_{m_2-i,0}X_2(s-je_2)
+(m_2-i)\delta_{s_2-j,0}X_1(m-(i+1)e_2)\Big)\\
&\ \ +\sum_{i=0}^{m_2}\sum_{j=0}^{s_2}\binom{m_2}{i}\binom{s_2}{j}\sum_{j=0}^{s_2}t_1^{m_1+s_1}t_2^{i+j} \otimes\\
&\  \   \Big(s_1 X_2(m+s-(i+j)e_2)
-(m_2-i)X_1(m+s-(i+j+1)e_2)\Big)\\
&=[ t^{m+e_1}\partial_1\otimes 1,t^s\partial_2\otimes 1]\\
&\ \ +\sum_{i=0}^{m_2}\sum_{j=0}^{s_2}\binom{m_2}{i}\binom{s_2}{j}\sum_{j=0}^{s_2}t_1^{m_1+s_1}t_2^{i+j} \otimes\\
&\ \   \Big(s_1 X_2(m+s-(i+j)e_2)
-(m_2-i)X_1(m+s-(i+j+1)e_2)\Big)\\
&=[ t^{m+e_1}\partial_1\otimes 1,t^s\partial_2\otimes 1]
 +s_1\sum_{i=0}^{s_2+m_2}\binom{s_2+m_2}{i} t_1^{m_1+s_1}t_2^{i} \otimes X_2(s+m-ie_2)\\
&\ \ + m_2\sum_{i=0}^{s_2+m_2-1}\binom{s_2+m_2-1}{i} t_1^{m_1+s_1}t_2^{i} \otimes X_1(s+m-e_2-ie_2)\\
&=\phi([1\cdot t^{m+e_1}\partial_1,\phi(1\cdot t^s\partial_2)]).
\endaligned$$

%$$\aligned
% &[\phi(1\cdot t_1^{m_1+1}t_2^{m_2}\partial_1),\phi(1\cdot t_1^{s_1+1}t_2^{s_2}\partial_1)]\\
%&=[ t_1^{m_1+1}t_2^{m_2}\partial_1\otimes 1, t_1^{s_1+1}t_2^{s_2}\partial_1\otimes 1]+\sum_{i=0}^{m_2}\binom{m_2}{i} [t_1^{m_1}t_2^{i} , t_1^{s_1+1}t_2^{s_2}\partial_1]\otimes X_1(m-i\e_2),\\
%&\ \  +\sum_{j=0}^{s_2}\binom{s_2}{j} [t_1^{m_1+1}t_2^{m_2}\partial_1,t_1^{s_1}t_2^{j}  ]\otimes X_1(s-j\e_2)\\
%&\ \ +  \sum_{i=0}^{m_2}\sum_{j=0}^{s_2}\binom{m_2}{i}\binom{s_2}{j} t_1^{m_1+s_1}t_2^{i+j} \otimes [X_1(m-i\e_2), X_1(s-j\e_2)]\\
%&=(s_1-m_1)t_1^{m_1+s_1+1}t_2^{m_2+s_2}\partial_1\otimes 1-m_1\sum_{i=0}^{m_2}\binom{m_2}{i} t_1^{m_1+s_1}t_2^{s_2+i} \otimes X_1(m-i\e_2),\\
%&\ \  +s_1\sum_{j=0}^{s_2}\binom{s_2}{j} t_1^{s_1+m_1}t_2^{m_2+j}  \otimes X_1(s-je_2)
%+ (s_1-m_1)\sum_{i=0}^{s_2+m_2}\binom{s_2+m_2}{i} t_1^{m_1+s_1}t_2^{i} \otimes X_1(s+m-i\e_2)\\
%%&=(s_1-m_1)t_1^{m_1+s_1+1}t_2^{m_2+s_2}\partial_1\otimes 1+ (s_1-m_1)\sum_{i=0}^{m_2}\sum_{j=0}^{s_2}t_1^{m_1+s_1}t_2^{i+j} \otimes X_{m+s-(i+j)e_2,1}\\
%&=(s_1-m_1)t_1^{m_1+s_1+1}t_2^{m_2+s_2}\partial_1\otimes 1
%+(s_1-m_1)\sum_{i=0}^{s_2+m_2}\binom{s_2+m_2}{i} t_1^{m_1+s_1}t_2^{i} \otimes X_1(s+m-i\e_2)\\
%&=\phi[1\cdot t_1^{m_1+1}t_2^{m_2}\partial_1,1\cdot t_1^{s_1+1}t_2^{s_2}\partial_1]
%\endaligned$$
Similarly, we  can check that
 \begin{equation}\label{4.1}\phi([1\cdot t^{m+e_1}\partial_1,\phi(1\cdot t^s\partial_2)])=[\phi(1\cdot t^{m+e_1}\partial_1),\phi(1\cdot t^s\partial_2)],\end{equation}
\begin{equation}\label{4.3}\phi[1\cdot t_1^{m_1+1}t_2^{m_2}\partial_2,1\cdot t_1^{s_1+1}t_2^{s_2}\partial_2]=[\phi(1\cdot t_1^{m_1+1}t_2^{m_2}\partial_2),\phi(1\cdot t_1^{s_1+1}t_2^{s_2}\partial_2)].\end{equation}

Finally, we verify the commutation relation between $\mg$ and $A$.
Indeed,
$$\aligned \ &[\phi(1\cdot t^{m+\delta_{k1}e_1}\partial_k),\phi(t^s\cdot 1)] \\ &=[t_1^{m_1+\delta_{k1}}t_2^{m_2}\partial_k\otimes 1+\sum_{i=0}^{m_2}\binom{m_2}{i} t_1^{m_1}t_2^{i} \otimes X_1(m-ie_2), t^s\otimes 1]\\
&= s_k t^{m+s+\delta_{k1}e_1-e_k}\otimes 1=\phi([1\cdot t^{m+\delta_{k1}e_1}\partial_k,t^s\cdot 1]).
\endaligned$$

This completes the proof of Lemma \ref{th-phi}.
\end{proof}

\begin{lemma}\label{th-rho}
The linear map $\rho: \mathcal{D}\otimes U(L)\rightarrow A\# U(\mg)$ defined by
$$\aligned \rho(1\otimes X_k(m))&=\sum_{i=0}^{m_2}(-1)^i\binom{m_2}{i}t_1^{-m_1}t_2^i\cdot t_1^{m_1+\delta_{k1}}t_2^{m_2-i}\partial_k-
 \delta_{m_2,0}1\cdot t_1^{\delta_{k1}}\partial_k,\\
\rho(\partial_1\otimes 1)&=t_1^{-1}\cdot t_1\partial_1,\ \
\rho(\partial_2\otimes 1)=1\cdot \partial_2,\ \
\rho(t_k\otimes 1)=t_k\cdot 1,\endaligned$$  is an associative algebra homomorphism, where $m\in\Z\times \Z_+$, $k=1,2$. Moreover,
$\rho$ is the inverse of $\phi$.
\end{lemma}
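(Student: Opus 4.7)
The plan is to prove Lemma~\ref{th-rho} in two stages: first establish that $\rho$ is a well-defined algebra homomorphism, then show it is the two-sided inverse of the map $\phi$ from Theorem~\ref{mainth}.

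For the first stage I would construct $\rho$ via the universal property of the tensor product. On the $\mathcal{D}$-factor the only nontrivial Weyl algebra relation to check is $[\rho(\partial_1),\rho(t_1)]=[t_1^{-1}\cdot t_1\partial_1,\,t_1\cdot 1]=t_1^{-1}\cdot[t_1\partial_1,t_1]\cdot 1=1$, together with $[\rho(\partial_1),\rho(t_2)]=0$ and $[\rho(\partial_1),\rho(\partial_2)]=0$; the remaining commutation and inverse relations are immediate from $A$ being a commutative subalgebra of $A\#U(\mg)$. On the $U(L)$-factor, the assignment is just the canonical extension of the Lie inclusion $L\hookrightarrow A\#U(\mg)$, which is tautological since $L$ was defined as a Lie subalgebra spanned by the elements $X_k(m)$. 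Lemma~\ref{D-L} then gives the needed commutativity between the two images inside $A\#U(\mg)$ (commutativity with $t_1^{-1}\cdot t_1\partial_1$ follows from the two listed instances of the lemma), so the two partial maps assemble into a single homomorphism $\rho:\mathcal{D}\otimes U(L)\to A\#U(\mg)$.

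For the second stage I would verify $\phi\circ\rho=\mathrm{id}$ and $\rho\circ\phi=\mathrm{id}$ only on generators. Both compositions obviously fix $t_1, t_2, \partial_2$ and the elements $t^m\cdot 1$. For $\phi\circ\rho$ applied to $1\otimes X_k(m)$, I would expand $\phi(X_k(m))$ using the algebra homomorphism property and the formulas of Theorem~\ref{mainth}; the result is a double sum that collapses via the identity
\[
\sum_{i=0}^{l}(-1)^i\binom{m_2}{i}\binom{m_2-i}{l-i}=\binom{m_2}{l}(1-1)^l=\binom{m_2}{l}\delta_{l,0},
\]
with the $\delta_{m_2,0}$ boundary correction in \eqref{X} cancelling the diagonal $\mathcal{D}$-term. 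For $\rho\circ\phi$ applied to $1\cdot t^{m+\delta_{k1}e_1}\partial_k$, substituting \eqref{X} into each $X_k(m-ie_2)$ and regrouping by the exponent of $t_2$ reduces matters to the same alternating binomial identity.

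The main obstacle I expect is bookkeeping in the $\rho\circ\phi$ step: once $\rho(t_1^{m_1+\delta_{k1}}t_2^{m_2}\partial_k\otimes 1)$ is rewritten inside the smash product, the occurrence of $\rho(\partial_1\otimes 1)=t_1^{-1}\cdot t_1\partial_1$ must be pushed past the $A$-prefactors using the standard relation $f\cdot X=(1\cdot X)(f\cdot 1)-X(f)\cdot 1$ for $X\in\mg,\,f\in A$, and the $X(f)$ correction terms must then combine with the expansion of $X_k(m-ie_2)$ so that the alternating signs telescope. These cancellations parallel those already performed in Lemmas~\ref{D-L} and \ref{th-phi}, so once the binomial identity above is isolated the remaining manipulations are routine.
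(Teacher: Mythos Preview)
Your proposal is correct and follows essentially the same approach as the paper: verify that $\rho$ respects the Weyl relations, invoke Lemma~\ref{D-L} for commutativity with $L$, and then check $\phi\rho$ and $\rho\phi$ on generators via the alternating Vandermonde-type identity $\sum_{i=0}^{l}(-1)^i\binom{m_2}{i}\binom{m_2-i}{l-i}=\binom{m_2}{l}\delta_{l,0}$. One small remark on your anticipated obstacle: in the $\rho\circ\phi$ step, no commutation correction is needed to move $t_1^{-1}\cdot t_1\partial_1$ past the $A$-prefactor, since in the smash product $(a\cdot 1)(b\cdot X)=ab\cdot X$ directly; the cancellation you should watch for is instead between $\rho(t_1^{m_1+\delta_{k1}}t_2^{m_2}\partial_k\otimes 1)=t_1^{m_1}t_2^{m_2}\cdot t_1^{\delta_{k1}}\partial_k$ and the $\delta_{m_2-i,0}$ boundary term coming from the $i=m_2$ summand of $X_k(m-ie_2)$.
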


\begin{proof} By the definition of $X_k(m)$ in (\ref{X}) and Lemma \ref{lemma-cr},
we can see that $\rho$ preserves the defining relations of $L$. To show that $\rho$ is a homomorphism, we also need to show
that $\rho$ preserves the defining relations of $\mathcal{D}$.
Indeed,
$$\aligned\ [\rho(\partial_k\otimes1),\rho(t_l\otimes 1)]
=[t_k^{-\delta_{k1}}\cdot t_k^{-\delta_{k1}}\partial_k,t_l\cdot 1]=\delta_{kl}=\rho([\partial_k\otimes1,t_l\otimes 1]).\endaligned$$

By Lemma \ref{D-L}, we see that $\rho$ preserves the commutativity relation between $L$ and $\mathcal{D}$.

By the definition, the composition $\varphi\rho$ is identity on $\mathcal{D}\otimes 1$. Let us check that it is also identity on $1\otimes U(L)$. We compute that
$$\aligned &\varphi\rho(1\otimes X_k(m))\\
&=\phi(\sum_{i=0}^{m_2}(-1)^i\binom{m_2}{i}t_1^{-m_1}t_2^i\cdot t_1^{m_1+\delta_{k1}}t_2^{m_2-i}\partial_k-
 \delta_{m_2,0}1\cdot t_1^{\delta_{k1}}\partial_k)\\
 %&=\sum_{i=0}^{m_2}(-1)^i\binom{m_2}{i}(t_1^{-m_1}t_2^i\otimes 1)\phi(1\cdot t_1^{m_1+\delta_{k1}}t_2^{m_2-i}\partial_k)-
% \phi(\delta_{m_2,0}1\cdot t_1^{\delta_{k1}}\partial_k)\\
 &=\sum_{i=0}^{m_2}(-1)^i\binom{m_2}{i}(t_1^{-m_1}t_2^i\otimes 1)\Big(t_1^{m_1+\delta_{k1}}t_2^{m_2-i}\partial_k\otimes 1\\
 &\ \ +\sum_{j=0}^{m_2-i}\binom{m_2-i}{j} t_1^{m_1}t_2^{j} \otimes X_k(m-(i+j)e_2)\Big)-
\delta_{m_2,0}t_1^{\delta_{k1}}\partial_k\otimes 1\\
&=\sum_{i=0}^{m_2}(-1)^i\binom{m_2}{i}\sum_{j=0}^{m_2-i}\binom{m_2-i}{j} t_2^{i+j} \otimes X_k(m-(i+j)e_2)\\
&=\sum_{l=0}^{m_2}(\sum_{i=0}^{l}(-1)^i\binom{l}{i})\binom{m_2}{l} t_2^{l} \otimes X_k(m-le_2)\\
&=1\otimes X_k(m).
\endaligned$$

Clearly the composition $\rho\phi$ is identity  on $A$. We will check its value on $U(\mg)$. Explicitly,
$$\aligned &\rho\phi(1\cdot t^{m+\delta_{k1}e_1}\partial_k)\\
&=\rho(t_1^{m_1+\delta_{k1}}t_2^{m_2}\partial_k\otimes 1)+\sum_{i=0}^{m_2}\binom{m_2}{i} \rho(t_1^{m_1}t_2^{i} \otimes X_k(m-ie_2))\\
&= +\sum_{i=0}^{m_2}\binom{m_2}{i}\sum_{j=0}^{m_2-i}\binom{m_2-i}{j}(-1)^jt_2^{i+j}
\cdot t_1^{m_1+\delta_{k1}}t_2^{m_2-i-j}\partial_k\\
&=\sum_{l=0}^{m_2}(\sum_{i=0}^{l}(-1)^j\binom{l}{j})\binom{m_2}{l}
t_2^l\cdot t_1^{m_1+\delta_{k1}}t_2^{m_2-l}\partial_k\\
&= 1\cdot t^{m+\delta_{k1}e_1}\partial_k.
\endaligned$$

Therefore, $\rho$ is the inverse of $\phi$. The proof is complete.
\end{proof}

Combining Lemma \ref{th-phi} and Lemma \ref{th-rho}, we can establish Theorem \ref{mainth}. A $\mathcal{D}$-module $M$ is called a weight module if the actions of
$t_1\partial_1$ and $t_2\partial_2$ are diagonalizable.

\begin{lemma} \label{D-module}Any nonzero weight $\mathcal{D}$-module $M$ has an irreducible submodule.
\end{lemma}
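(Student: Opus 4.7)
The plan is to exhibit a weight vector $w \in M$ whose cyclic submodule $\mathcal{D} w$ is irreducible. Because $t_1$ is invertible in $\mathcal{D}$, one has $\partial_1 = \lambda_1 t_1^{-1}$ on every weight space $M_\lambda$, while the Weyl-algebra relations give $t_2 \partial_2 = \lambda_2$ and $\partial_2 t_2 = \lambda_2 + 1$ on $M_\lambda$. Hence $\ker(\partial_2|_M)$, $\ker(t_2|_M)$, $\ker(\partial_1|_M)$ are concentrated in weight spaces of second weight $0$, second weight $-1$, and first weight $0$, respectively. I would choose $w$ by the dichotomy: (I) if $\ker(\partial_2|_M) \neq 0$, let $w \in \ker \partial_2$ be a weight vector (necessarily of second weight $0$); (II) else if $\ker(t_2|_M) \neq 0$, let $w \in \ker t_2$ be a weight vector (of second weight $-1$); (III) otherwise both $\partial_2$ and $t_2$ are injective on $M$, in which case any $v \in M_{(\mu_1, 0)}$ satisfies $t_2 \partial_2 v = 0$, so $\partial_2 v$ is either zero (contradicting $\ker \partial_2 = 0$) or in $\ker t_2$ (contradicting $\ker t_2 = 0$), forcing $v = 0$; by injectivity no weight in the support of $M$ has $\lambda_2 \in \Z$, and I would take $w$ to be any weight vector.

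I would then show that $\mathcal{D} w$ has one-dimensional weight spaces with explicit bases. Starting from the PBW spanning set $\{t_1^a t_2^b \partial_1^c \partial_2^d w\}$ and applying the falling-factorial identity
\[ t_i^k \partial_i^k u = \mu_i(\mu_i - 1)\cdots(\mu_i - k + 1)\,u \]
(valid on any weight vector $u$ of $i$-th weight $\mu_i$) together with the kernel or non-integrality condition on $w$, each such monomial reduces to a scalar multiple of a canonical basis vector: $t_1^n t_2^m w$ in Case (I); $t_1^n \partial_2^d w$ in Case (II); $t_1^n t_2^{\max(m, 0)} \partial_2^{\max(-m, 0)} w$ in Case (III). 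These basis vectors are nonzero by the invertibility of $t_1$ combined with the kernel or injectivity condition governing the second coordinate.

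Finally, let $N \subseteq \mathcal{D} w$ be any nonzero submodule. Its weight decomposition supplies a nonzero weight vector $u \in N$, which by the one-dimensionality is a nonzero scalar multiple of a standard basis vector. Applying $t_1^{\pm 1}$ to adjust the first coordinate, and then suitable powers of $t_2$ or $\partial_2$ (whose factorial coefficients are nonzero under the chosen setup for $w$) to adjust the second, we transform $u$ into a nonzero scalar multiple of $w$, so $w \in N$ and hence $N = \mathcal{D} w$. The main obstacle is Case (III), where neither $t_2$ nor $\partial_2$ annihilates any weight vector; there one must carefully manage the simultaneous $t_2, \partial_2$ reductions using $\lambda_2 \notin \Z$ to ensure that the scalars $(\lambda_2)_k$ and $(\lambda_2 + 1)(\lambda_2 + 2) \cdots (\lambda_2 + k)$ remain nonzero throughout.
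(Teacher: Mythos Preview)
Your argument is correct. The case split (I)/(II)/(III) is sound, the one-dimensionality of the weight spaces of $\mathcal{D}w$ follows as you describe (the key identity $t_2\partial_2^d w=(\lambda_2-d+1)\partial_2^{d-1}w$ in Case~(III) makes the closure under $t_2$ work even though $t_2w\neq 0$), and the transitivity argument recovers $w$ from any nonzero weight vector of $\mathcal{D}w$. One cosmetic point: the step ``by injectivity no weight in the support of $M$ has $\lambda_2\in\Z$'' deserves a sentence (apply $\partial_2^{n}$ or $t_2^{-n}$ to land in second weight $0$, then use the vanishing just proved together with injectivity).

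The paper's proof is shorter and more conceptual: it picks \emph{any} weight vector $v$ with weight $a=(a_1,a_2)$, observes that $\mathcal{D}v$ is a quotient of $\mathcal{D}/I_a$ where $I_a=\mathcal{D}(t_1\partial_1-a_1)+\mathcal{D}(t_2\partial_2-a_2)$, identifies $\mathcal{D}/I_a$ with the natural $\mathcal{D}$-module $t^{a}\C[t_1^{\pm1},t_2^{\pm1}]$, and notes that the latter has length~$1$ (when $a_2\notin\Z$) or length~$2$ (when $a_2\in\Z$, with socle $t_1^{a_1}\C[t_1^{\pm1},t_2]$). Hence $\mathcal{D}v$ has finite length and therefore an irreducible submodule. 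Your approach is essentially an explicit unpacking of that composition series carried out inside $M$: your Cases~(I) and~(II) build the two irreducible subquotients directly, and Case~(III) is the length-one situation. The paper's route is quicker once one knows the structure of $t^{a}\C[t_1^{\pm1},t_2^{\pm1}]$; yours is self-contained and makes the irreducible submodule completely explicit.
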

\begin{proof} Choose a nonzero $v\in M$ such that $t_i\partial_iv=a_iv, i=1,2$ for some $a=(a_1,a_2)\in\C^2$. Let $I_a$ be the left ideal of $\mathcal{D}$ generate by $t_1\partial_1- a_1, t_2\partial_2- a_2$. We can see that $D/I_a\cong t^a\C[t_1^{\pm 1},t_2^{\pm 1}]$. If $a_2\in\Z$, then the $\mathcal{D}$-module $t^a\C[t_1^{\pm 1},t_2^{\pm 1}]$ has two irreducible sub-quotients: $t_1^{a_1}\C[t_1^{\pm 1},t_2]$, $t_1^{a_1}\C[t_1^{\pm 1},t_2^{\pm 1}]/t_1^{a_1}\C[t_1^{\pm 1},t_2]$. Otherwise, it is irreducible.
Therefore, as a quotient module of $D/I_a$, the submodule
$\mathcal{D}v$ of $M$ must has an irreducible submodule.
\end{proof}

 Let $P$ be a $\mathcal{D}$-module and $V$ be a $\gl_{2}$-module.
Then the tensor product $\mathcal{M}(P,V)=P\otimes_\C V$ becomes a $\widetilde{\mg}$-module (see \cite{Sh,LLZ})
with the action

\begin{equation}\label{Action1}
 t^{m+\delta_{k1}e_1}\partial_k \cdot(g\otimes v)=( t^{m+\delta_{k1}e_1}\partial_kg)\otimes v
+ m_1t^mg\otimes E_{1k}v+m_2t^{m-e_2}g\otimes E_{2k}v,
\end{equation}
\begin{equation}\label{Action2}
t^m \cdot(g\otimes v)=(t^m g)\otimes v,
\end{equation} where  $ t^m\in A, g\in P, v\in V,k=1,2$.
We will show any irreducible uniformly bounded jet modules for $\mg$ is
of the form $\mathcal{M}(P,V)$.

The following lemma is well known, see Lemma 2.7 in \cite{L}.

\begin{lemma}\label{tensor}Let $A, B$ be two unital associative algebras and $B$ has a countable basis. If $M$ is  an irreducible module   over $A\otimes B$ that  contains an irreducible $A=A\otimes \C$ submodule $W$, then $M\cong W\otimes V$ for  an irreducible $B$-module $V$.\end{lemma}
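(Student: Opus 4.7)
The plan is to realize $V$ as the multiplicity space of $W$ in $M$, namely $V := \operatorname{Hom}_A(W,M)$, which inherits a left $B$-module structure via $(b\cdot f)(w) := b\,f(w)$; this is well-defined precisely because the actions of $A$ and $B$ on $M$ commute. I would then consider the evaluation map
\[
\Phi\colon W\otimes_{\C} V \longrightarrow M,\qquad w\otimes f\longmapsto f(w),
\]
and aim to show that $\Phi$ is an $A\otimes B$-module isomorphism with $V$ irreducible over $B$.

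The $A\otimes B$-linearity of $\Phi$ is a routine check. Surjectivity is easy: the inclusion $\iota\colon W\hookrightarrow M$ is an element of $V$, so $\mathrm{Im}\,\Phi\supseteq \Phi(W\otimes \iota)=W$, and since $\mathrm{Im}\,\Phi$ is an $A\otimes B$-submodule of the $A\otimes B$-irreducible $M$, it must equal $M$. The heart of the matter is injectivity. For this I would first argue that $M$ is $W$-isotypic as an $A$-module: since $B\cdot W$ is an $A\otimes B$-submodule of $M$ containing the nonzero $W$, it equals $M$; and for each $b\in B$ the $A$-linear map $w\mapsto bw$ has image $bW$ which, by the simplicity of $W$, is either $0$ or isomorphic to $W$. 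A sum of copies of a single simple module is semisimple and isotypic, so $M\cong W^{(I)}$ as $A$-modules for some index set $I$. Here the countable-basis hypothesis on $B$ becomes crucial: it gives the bound $\dim_{\C}M\leq \aleph_0\cdot\dim_{\C}W$, which via a Dixmier-style Schur Lemma over the uncountable field $\C$ forces $\operatorname{End}_A(W)=\C$. Combining, $V=\operatorname{Hom}_A(W,W^{(I)})\cong \C^{(I)}$ and $\Phi$ becomes the canonical isomorphism $W\otimes \C^{(I)}\xrightarrow{\sim} W^{(I)}$.

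Finally I would deduce that $V$ is irreducible as a $B$-module: any nonzero $B$-submodule $V'\subseteq V$ would give rise to the nonzero $A\otimes B$-submodule $W\otimes V'$ of $W\otimes V\cong M$, and $A\otimes B$-irreducibility of $M$ would force $W\otimes V'=W\otimes V$, whence $V'=V$. The main obstacle, and the only place the countable-basis hypothesis is genuinely used, is the application of Schur's Lemma to the possibly infinite-dimensional simple $A$-module $W$; once $\operatorname{End}_A(W)=\C$ has been secured via the Dixmier-style argument, the remainder of the proof is a formal matching of multiplicity spaces.
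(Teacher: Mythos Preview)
The paper does not prove this lemma; it simply cites Lemma~2.7 of Li's paper and moves on. So there is no proof in the paper to compare against, but your argument can be assessed on its own terms.

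Your overall strategy is the standard one, and every step is correct except the one you flag as the heart of the matter. The claim that the bound $\dim_{\C} M \le \aleph_0 \cdot \dim_{\C} W$ ``via a Dixmier-style Schur Lemma'' forces $\operatorname{End}_A(W) = \C$ is false. Take $A = \C(x)$, $B = \C[y]$, and $M = \C(x)[y]/(y^2 - x)$. Then $B$ has a countable basis, $M$ is simple over $A \otimes B = \C(x)[y]$ (since $y^2 - x$ is irreducible over $\C(x)$), and $W = \C(x)\cdot 1 \subset M$ is a simple $A$-submodule with $\operatorname{End}_A(W) = \C(x) \ne \C$. The dimension bound certainly holds here (indeed $\dim_{\C} M = 2\dim_{\C} W$), so no Dixmier-type argument can extract $\operatorname{End}_A(W) = \C$ from it. Worse, $M$ is genuinely not of the form $W \otimes_{\C} V$: on any such tensor product the operator $y$ acts as $1 \otimes y_V$ and hence has determinant in $\C$ relative to any $\C(x)$-basis, whereas on $M$ the action of $y$ in the basis $\{1,y\}$ has determinant $-x \notin \C$.

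This shows that the lemma, exactly as stated in the paper, is not correct: the countability hypothesis must be placed on $A$ (or on both algebras), not on $B$ alone. With $\dim_{\C} A \le \aleph_0$ one gets $\dim_{\C} W \le \aleph_0$, and then the genuine Dixmier lemma yields $\operatorname{End}_A(W) = \C$ directly from the simplicity of $W$; your proof then goes through verbatim. In the paper's only application of this lemma both tensor factors $\mathcal{D}$ and $U(L)$ are countable-dimensional, so the slip in the hypothesis is harmless there. The remainder of your argument --- the $W$-isotypicity of $M$ via $M = \sum_i b_i W$, the identification $V \cong \C^{(I)}$, and the deduction that $V$ is $B$-simple --- is clean and correct once $\operatorname{End}_A(W) = \C$ is secured.
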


Finally, we will give a clear description of  all irreducible   jet $\mg$-modules with finite dimensional weight spaces.  An $L$-module $V$ is called a weight module if the actions of
$X_2(0,1)$ is diagonalizable.
Through the isomorphism $\phi:A\# U(\mg)\rightarrow \mathcal{D}\otimes U(L)$
in Theorem \ref{mainth}, an irreducible jet $\mg$-module $M$ can be viewed an irreducible module
 over the algebra $\mathcal{D}\otimes U(L)$, denoted by $M^{\phi}$.

\begin{theorem}\label{th4.5} Let $M$ be an irreducible   jet $\mg$-module with finite dimensional weight spaces. Then
$M^{\phi}  \cong P\otimes V$ for some irreducible weight $\mathcal{D}$-module $P$, some
 irreducible weight $L$-module $V$.  Moreover, if $M$ is uniformly bounded, then $M\cong \mathcal{M}(P,V)$, where $V$ is an irreducible  finite dimensional $\gl_2$-module.

\end{theorem}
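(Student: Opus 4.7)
The plan is to use the isomorphism $\phi$ of Theorem \ref{mainth} to realize $M$ as an irreducible $\mathcal{D}\otimes U(L)$-module $M^\phi$, then apply a tensor factorization. First I would record the images $\phi(1\cdot t_1\partial_1)=t_1\partial_1\otimes 1$ and $\phi(1\cdot t_2\partial_2)=t_2\partial_2\otimes 1+1\otimes X_2(0,1)$; in particular the $\mh$-weight decomposition of $M$ corresponds to a simultaneous eigenspace decomposition under these two commuting operators on $M^\phi$. Pick any $\lambda$ with $M_\lambda\neq 0$; since $M_\lambda$ is finite-dimensional and $1\otimes X_2(0,1)$ commutes with $\phi(\mh)$, there exists $v\in M_\lambda$ that is an eigenvector for $X_2(0,1)$, hence also separately for $t_1\partial_1\otimes 1$ and $t_2\partial_2\otimes 1$. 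Consequently $\mathcal{D}v$ is a quotient of some $\mathcal{D}/I_a$ and is therefore a weight $\mathcal{D}$-module, so by Lemma \ref{D-module} it contains an irreducible $\mathcal{D}$-submodule $P$; since $U(L)$ has countable dimension, Lemma \ref{tensor} yields $M^\phi\cong P\otimes V$ for some irreducible $U(L)$-module $V$.

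Next I would verify that $V$ is a weight $L$-module. If $X_2(0,1)$ had a genuine generalized eigenvector $v_1\in V$ with $(X_2(0,1)-b)v_1=v_0\neq 0$ and $X_2(0,1)v_0=bv_0$, then for any weight vector $p\in P_{(a_1,a_2)}$ the element $p\otimes v_1$ would satisfy $(\phi(1\cdot t_2\partial_2)-(a_2+b))(p\otimes v_1)=p\otimes v_0\neq 0$, contradicting the diagonalizability of $t_2\partial_2$ on $M$. Hence $X_2(0,1)$ acts diagonalizably on $V$ and $V$ is a weight $L$-module, completing the first assertion.

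For the uniformly bounded part, I note from the proof of Lemma \ref{D-module} that $P$, being a subquotient of some $\mathcal{D}/I_a\cong t^a\C[t_1^{\pm 1},t_2^{\pm 1}]$, has one-dimensional weight spaces; therefore $\dim V_b\leq \dim M_{(a_1,a_2+b)}\leq C$ uniformly in $b$. To upgrade this to finite-dimensionality of $V$, I would transfer $V$ along $\theta$ to an irreducible $\mm_{1,0}\Delta$-module and adapt the proof of Lemma \ref{lemma-ideal}: the adjoint action of $d=(t_1-1)\partial_1+t_2\partial_2$ makes $\mm^+_{1,0}\Delta$ a $\Z_+$-graded Lie algebra, and a bounded-multiplicity variant of that argument forces $(\mm^+_{1,0})^{l}\Delta\cdot V=0$ for large $l$, from which the bracket identity used in Lemma \ref{lemma-ideal} yields $\mm_{1,0}^2\Delta\cdot V=0$. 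Lemma \ref{iso-gl2} then identifies $V$ with an irreducible (necessarily finite-dimensional) $\gl_2$-module. Finally, since in this setting $X_k(m_1,0)$ and $X_k(m_1,1)$ act on $V$ by $m_1E_{1k}$ and $E_{2k}$ while $X_k(m_1,m_2)$ acts as zero for $m_2\geq 2$, substituting into the defining formula of $\phi$ collapses the action of $1\cdot t^{m+\delta_{k1}e_1}\partial_k$ on $P\otimes V$ to exactly (\ref{Action1}), giving $M\cong\mathcal{M}(P,V)$.

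The main obstacle I anticipate is proving $\mm_{1,0}^2\Delta\cdot V=0$ from bounded rather than finite dimension. Lemma \ref{lemma-ideal} used finite-dimensionality to bound the eigenvalues of $d$ outright; here I would first observe that $\mathrm{ad}(X_2(0,1))$ has integer eigenvalues on $L$ so the $X_2(0,1)$-weights of $V$ lie in a single coset of $\Z$, and then argue that if $(\mm^+_{1,0})^{l}\Delta$ acted nontrivially on $V$ for arbitrarily large $l$, iterating positive-degree elements on a generator would produce infinitely many nonzero weight vectors in the same $X_2(0,1)$-weight, eventually violating the multiplicity bound $C$; pinning down this inductive step cleanly is the most delicate part of the argument.
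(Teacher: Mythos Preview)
Your argument for the first assertion is essentially the paper's, only spelled out in more detail; in particular your contradiction argument showing $X_2(0,1)$ acts diagonalizably on $V$ makes explicit what the paper states in one line.

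Where you diverge is in the uniformly bounded case, and here you are working harder than necessary. You extract only the inequality $\dim V_b\le C$ from the tensor decomposition, and then try to push through a bounded-multiplicity analogue of Lemma~\ref{lemma-ideal}, which you rightly flag as delicate. The paper bypasses this entirely: it asserts directly that $V$ is finite dimensional, and then invokes Theorem~\ref{L-module} (i.e.\ Lemma~\ref{lemma-ideal} for genuinely finite-dimensional $V$). The reason this works is a sharper use of the very dimension count you started. Fix a $t_1\partial_1$-weight $\lambda_1$ in the support of $P$. From the description in Lemma~\ref{D-module}, the set $\{c:\,P_{(\lambda_1,c)}\ne 0\}$ is an infinite arithmetic progression (all of $a_2+\Z$, or a half-line $\Z_{\ge 0}$ or $\Z_{<0}$), each such weight space being one-dimensional. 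Since
\[
M^{\phi}_{(\lambda_1,\mu)}\ \cong\ \bigoplus_{c}\,P_{(\lambda_1,c)}\otimes V_{\mu-c},
\]
one has $\dim M_{(\lambda_1,\mu)}=\sum_{c}\dim V_{\mu-c}$, and letting $\mu$ run (or tend to $\pm\infty$ along the relevant coset) forces $\sum_b\dim V_b\le C$, i.e.\ $\dim V\le C<\infty$. Once $V$ is finite dimensional, Theorem~\ref{L-module} applies verbatim and the identification with $\mathcal M(P,V)$ follows exactly as you describe. So the ``main obstacle'' you anticipate is not actually there: you were discarding information by fixing a single weight of $P$ rather than summing over all of them.
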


\begin{proof}  Let $h_1=t_1\partial_1\otimes 1, h_2=t_2\partial_2\otimes 1+ 1\otimes X_2(0,1)$.  From
 $\phi(t_1\partial_1)=h_1, \phi(t_1\partial_2)=h_2$, we see that $M^{\phi}$ is a weight module with respect to the actions of $h_1$ and $h_2$. It can be seen that  the adjoint actions of $t_2\partial_2\otimes 1 $ and $1\otimes X_2(0,1)$ on  $\mathcal{D}\otimes U(L)$ is diagonalizable, and each weight space of $M^{\phi} $ is invariant under the actions
 of $t_2\partial_2\otimes 1 $ and $1\otimes X_2(0,1)$. So $M^{\phi}$ is a weight module over $\mathcal{D}$ and $1\otimes X_2(0,1)$ acts diagonally on  $M^{\phi}$.
  Then   from
 Lemma \ref{D-module} and Lemma \ref{tensor}, there is an
irreducible weight $\mathcal{D}$-module $P$, some
 irreducible weight $L$-module $V$ such that $M^{\phi}\cong P\otimes V$.
 If $M$ is uniformly bounded, then the $L$-module $V$ is  finite dimensional. By Theorem \ref{L-module},  $V$ is an irreducible $\gl_2$-module. Then we complete the proof.
\end{proof}

\begin{center}
\bf Acknowledgments
\end{center}

\noindent   G.L. is partially supported by NSF of China (Grants
11771122).

%The authors would like to thank the referee for  useful suggestions concerning the presentation of the paper.

\vspace{4mm}

\noindent   \noindent M.N.: School of Mathematics and Statistics,
Henan University, Kaifeng 475004, China. Email: 18404904552@163.com

\vspace{0.2cm}

 \noindent G.L.: School of Mathematics and Statistics,
and  Institute of Contemporary Mathematics,
Henan University, Kaifeng 475004, China. Email: liugenqiang@henu.edu.cn

\end{document}